\documentclass[12pt,reqno]{amsart}

\usepackage[dvipsnames]{xcolor}

\usepackage{tikz}
\usepackage[colorlinks=true, linkcolor=blue, citecolor=blue, urlcolor=blue]{hyperref}
\usetikzlibrary{decorations.pathreplacing, decorations.markings}
\usepackage{amsmath}
\usepackage{amsfonts}
\usepackage{amssymb}
\usepackage{verbatim}
\usepackage[left=2.9cm,right=2.9cm,top=3cm,bottom=3cm]{geometry}
\usepackage{enumerate}

\usepackage{graphicx}

\newcommand{\mP}{{\mathbb P}}

\newcommand{\I}{\mathcal{I}}

\newcommand{\N}{\mathbb N}

\newcommand{\Z}{\mathbb Z}

\newtheorem{theorem}{Theorem}
\newtheorem{lemma}{Lemma}
\newtheorem{remark}{Remark}

\theoremstyle{definition}

\newtheorem{proposition}{Proposition}
\newtheorem{conjecture}{Conjecture}

\newtheorem{definition}{Definition}

\title[Ising model on one-dimensional quasi-transitive graphs]{Zero-temperature stochastic Ising model on one-dimensional quasi-transitive graphs}

\author{Emilio De Santis}
\address{University of Rome La Sapienza, Department of Mathematics, Piazzale Aldo Moro 5, 00185, Rome, Italy}
\email{emilio.desantis@uniroma1.it} 
\email{desantis@mat.uniroma1.it}

\begin{document}

\begin{abstract}
We consider the zero-temperature stochastic Ising process describing $\pm 1$ spin-flip dynamics on an infinite one-dimensional quasi-transitive graph $G=(V,E)$ with finite interaction range $K$. We prove that the zero-temperature limit of the Glauber dynamics for this Ising model exhibits a Type $\mathcal{I}$ behavior (infinite fluctuations of all vertices) if and only if the graph possesses the so-called \emph{shrink property}. For graphs lacking this property, we introduce an algorithmic framework based on an auxiliary spatial automaton to distinguish, in finite time, between Type $\mathcal{F}$ behavior (almost sure local fixation) and Type $\mathcal{M}$ behavior (a mixed regime characterized by the presence of blinkers). We prove that the classification among these three regimes is algorithmically decidable. Furthermore, we provide a constructive example of a graph supporting blinkers of arbitrarily large size.

\medskip

\noindent \textbf{Keywords:} Coarsening · Zero-temperature dynamics · Quasi-transitive graphs · Decidability · Asynchronous cellular automata · Glauber dynamics.

\medskip

\noindent \textbf{Mathematics Subject Classification (2020):} 82C20 · 82C22 · 37B15 · 68Q05.
\end{abstract}

\maketitle

\section{Introduction}
\label{s:intro}

In this paper, we study the zero-temperature stochastic Ising model $(\sigma_t)_{t\geq 0}$ on a class of connected one-dimensional quasi-transitive graphs featuring homogeneous ferromagnetic interactions. The initial spin configuration is distributed according to a Bernoulli product measure with parameter $p\in(0,1)$, representing the initial density of $+1$ spins (see, e.g., \cite{DBG1994, NNS2000}). The continuous-time Markovian dynamics, often referred to as \emph{domain coarsening} or \emph{majority dynamics}, evolves via a local energy-minimization rule: each vertex updates its spin at the arrival times of an independent Poisson clock of rate $1$. A vertex switches its spin value if it disagrees with the majority of its neighbors, with ties resolved by a fair coin toss.
Beyond its physical relevance as a model for zero-temperature quenches, this process serves as a natural paradigm for opinion dynamics and cellular automata.

A classical and profound problem for such interacting particle systems is to establish whether the local dynamics eventually freezes or fluctuates indefinitely. Formally, a vertex $v$ is said to \emph{fixate} if its spin flips only finitely many times almost surely. Following the foundational classification scheme formalized by Gandolfi, Newman, and Stein \cite{GNS2000} (see also \cite{NNS2000}), a zero-temperature spin system is categorized into one of three distinct asymptotic regimes:
\begin{itemize}
    \item \textbf{Type $\mathcal{I}$ (Indefinite Fluctuations):} no vertex fixates almost surely, meaning that every site flips infinitely often with probability one;
    \item \textbf{Type $\mathcal{F}$ (Complete Fixation):} every vertex fixates almost surely, implying that the configuration $\sigma_t$ approaches a random local limit;
    \item \textbf{Type $\mathcal{M}$ (Mixed Behavior):} both fixating and non-fixating vertices coexist in the graph with probability one.
\end{itemize}

Early literature on majority dynamics extensively focused on the standard Euclidean lattice $\mathbb{Z}^d$, with a particular emphasis on the low-dimensional cases $d=1$ and $d=2$. 
On the nearest-neighbor one-dimensional lattice $\mathbb{Z}$, the classification as Type $\mathcal{I}$ universally for any initial density $p\in (0,1)$ was explicitly established by Nanda, Newman, and Stein \cite{NNS2000}, a result that follows directly from the underlying duality with coalescing random walks introduced in the seminal work of Arratia \cite{A1983}.
 In higher dimensions, the behavior depends strictly on geometry and randomness. For homogeneous ferromagnets, the two-dimensional lattice $\mathbb{Z}^2$ exhibits Type $\mathcal{I}$ behavior at the symmetric density $p=1/2$ \cite{NNS2000}, and the interface dynamics can be tracked via recurrent macroscopic cluster growth \cite{CDN:clusters}. Conversely, introducing continuous disordered interactions leads to Type $\mathcal{F}$ behavior due to energy-lowering trapping barriers \cite{DN2003}, while discrete $\pm J$ spin glasses on $\mathbb{Z}^2$ give rise to Type $\mathcal{M}$ dynamics \cite{GNS2000}. For high-dimensional lattices $\mathbb{Z}^d$, fixation at $+1$ (Type $\mathcal{F}$) occurs as long as the initial density $p$ exceeds a critical threshold $p^{\star}_{d}$, where $p^{\star}_{d} \to 1/2$ as $d \to \infty$. Similar complete fixation phenomena have been established on homogeneous trees of degree at least $3$ \cite{CM2006}, as well as in settings involving frozen boundaries or restricted cooperative spin updates \cite{CDS18, DEKNS2016}.

Recently, attention has shifted toward understanding how these dynamical phases depend on the underlying graph topology beyond standard Euclidean lattices, notably on quasi-transitive structures \cite{CDS18,DL2023}. In \cite{DL2023}, the authors investigated planar quasi-transitive graphs, establishing that under strict rotation and translation symmetries, the shrink property acts as a necessary and sufficient geometric condition for the zero-temperature stochastic Ising model to be of Type $\mathcal{I}$ at the symmetric density $p=1/2$. The proof in that two-dimensional setting relied heavily on the rotation invariance of regular regions combined with the FKG inequality to ensure that clusters grow indefinitely.

However, moving to general one-dimensional quasi-transitive graphs with an extended interaction range $K > 1$ introduces completely different challenges. In this 1D framework, rotation symmetries and planar crossing arguments can no longer be exploited, and the system's evolution is instead driven by the combinatorial behavior of boundaries and interfaces between spin phases. 

In this paper, we achieve a much more comprehensive characterization compared to the planar case. While our previous work in \cite{DL2023} was limited to distinguishing between Type $\mathcal{I}$ and non-Type $\mathcal{I}$ behaviors, here we provide a complete and constructive classification of all three phases ($\mathcal{I}$, $\mathcal{F}$, and $\mathcal{M}$). Crucially, we shift the paradigm toward finite-space decidability: we prove that the shrink property can be efficiently verified via a finite sequence of operations, and that the global asymptotic behavior of the model is entirely decidable through a finite enumeration analysis.

The main contributions of this paper are formalized through a series of structural and algorithmic results that completely map the asymptotic behavior of the $I(G,p)$-model on $\mathcal{G}$. 

Our first main result establishes the global decidability of the shrink property, anchoring this topological feature to the finite-time execution of the greedy exploration framework introduced in Section~\ref{Sec:preliminary}. Specifically, for any graph $G \in \mathcal{G}$ characterized by a translation period $L$ and an interaction range $K$, we prove that determining whether $G$ possesses the shrink property is an algorithmically decidable problem. We show that the greedy exploration procedure is guaranteed to terminate in a bounded number of operations (depending on $L$ and $K$), thereby providing a constructive method to detect the presence or absence of finite stable sets.

Building upon this algorithmic foundation, we move to the full classification of the asymptotic regimes (Types $\mathcal{I}$, $\mathcal{F}$, and $\mathcal{M}$) for any initial density $p \in (0,1)$. Unlike the higher-dimensional or planar settings where boundaries can grow indefinitely in complex topological shapes, the one-dimensional quasi-transitive structure allows us to classify the phases via the finite enumeration of blocking walls. Our second main result leverages this property to establish that the global phase diagram is entirely computable. We prove the existence of an explicit algorithm that takes the edge representation of $G$ and the geometric parameters $L$ and $K$ as inputs, and determines in finite time whether the system exhibits Indefinite Fluctuations, Complete Fixation, or Mixed Behavior.

The remainder of this paper is organized as follows. In Section~\ref{Sec:preliminary}, we have collected the foundational definitions and established the structural equivalence between the shrink property, minimal icut-set deformability, and the monotonicity of the step-like interface. Section~\ref{s:type_I_classification} is dedicated to the proof of Theorem~\ref{t:main result d1} and the explicit complexity bounds of the greedy exploration. In Section~\ref{sec:type_F_M}, we develop the blocking configurations analysis and prove the complete phase classification of Theorem~\ref{thm:decidability}. Section~\ref{s:extended_blinkers} presents some meaningful examples of quasi-transitive graphs where our algorithmic criteria are applied to highlight non-trivial mixed behaviors. Finally, Section~\ref{s:conclusions} offers concluding remarks and outlines future research perspectives, formalizing a conjecture on the behavior under asymmetric initial densities.

   \section{Interfaces, Deformability, and Algorithmic Criteria} \label{Sec:preliminary}

We consider the continuous-time Markov process $ (\sigma_t)_{t\geq 0} $, which describes $\pm 1$ spin flips dynamics on an infinite graph $G=(V,E)$ with bounded maximal degree.
   	The state space is $ \Sigma=\{+1,-1\}^{V} $ and the initial state is distributed according to a Bernoulli product measure with density $ p \in [0,1]$ of spins $ +1 $ and $ 1-p $ of spins $ -1 $. The  process corresponds to the zero-temperature limit of Glauber dynamics for an Ising model with formal Hamiltonian
   	\begin{equation}
   		\label{eq:hamiltonian}
   		\mathcal{H}(\sigma)=-\sum_{\substack{u,v \in V:\\ \{u,v\}\in E}}\sigma(u)\sigma(v), 
   	\end{equation}
   	where $ \sigma \in \Sigma $.
   	The definition \eqref{eq:hamiltonian} is not well posed for infinite graphs. For this reason, we introduce the \emph{energy change} at vertex $ v\in V $ as
   	\begin{equation} \label{Deltaenergy}
   		\Delta\mathcal{H}_v(\sigma)=2\sum_{\substack{u \in V:\\ \{u,v\} \in E}}\sigma(u)\sigma(v).
   	\end{equation}
   	
   	The process $ (\sigma_t)_{t\geq 0} $ is a Markov process on $ \Sigma $ with infinitesimal generator having as flip rates at time $t$
   	\begin{equation}
   		\label{e:our rates}
   		c(v,\sigma_t)=\begin{cases}
   			0 & \text{ if  } \Delta\mathcal{H}_v(\sigma_t)>0  \\
   			\frac{1}{2} & \text{ if }  \Delta\mathcal{H}_v(\sigma_t)=0 \\
   			1 & \text{ if }\Delta\mathcal{H}_v(\sigma_t)<0 .
   		\end{cases}
   	\end{equation} 
   In the following, we will refer to this model 
   as  $ I(G,p) $-model where $G$ is the underlying graph and $p$ is the  density of the Bernoulli product measure for the initial configuration. We recall some definitions given in \cite{DL2023}.  For any vertex $v \in V$ and any subset $S \subset V$ we write  
$$
\deg_{S}(u) := |\{v \in S : \{u,v\} \in E\}| \, .
$$
   	\begin{definition}[Shrink property]
   		Given a graph $ G=(V,E) $, we say that $ G $ has the \emph{shrink property} if for each finite subset $ S\subset V $, there exists $ u\in S $ such that $ \deg_{V\setminus S}(u)\geq \deg_S(u) $.
   	\end{definition}
   \begin{definition}[Stable set]    	\label{d:stable}
   	Given a graph $ G=(V,E) $, we say that a subset $ S\subset V $ is \emph{stable} if $ \deg_{V\setminus S}(u)< \deg_S(u) $ for each   $ u\in S $.
   \end{definition} 
Therefore, by the aforementioned definitions, a graph $G$ possesses the shrink property if and only if it contains no finite stable subsets.
   Furthermore, we note that if $ S\subset V $ is a finite stable set then in the $ I(G,p) $-model with $ p\in (0,1) $ all vertices in $ S $ fixate at the value $ +1 $ (or $ -1 $) with positive probability.

   	Let $ G=(\Z,E) $ be a connected graph satisfying the following properties:
   	\begin{enumerate}
   		\item[(A1)]There exists $ L\in \Z $ such that for any $ x,y \in \Z $
   		\begin{equation*}
   			\{x,y\}\in E \iff \{x+L,y+L\}\in E.
   		\end{equation*}
   		Then	we say that $G$ is  translation invariant   with respect to $ L $.
   		\item [(A2)] The maximal degree of the graph $G$, denoted $ \Delta(G)$, is finite.  
   	\end{enumerate} 
Such a graph $G = (\mathbb{Z}, E)$ is quasi-transitive because its automorphism group contains the subgroup of translations $L\mathbb{Z}$, which acts on $V$ with a finite number of orbits (at most $L$), see e.g. \cite{HJ2006} and \cite{DL2023}. Indeed, the translation invariance  with respect to  $ L $ provides a partition of $ \Z $ in classes and the number of classes is at most  $ L $. Moreover, since $G$ is translation invariant and $ \Delta(G)<\infty $, there exists a finite interaction range $K<\infty$ such that $|x-y|\leq K$ for each $\{x,y\}\in E$. Specifically, we set $K := \max \{|x-y| : x,y\in \Z, \{x,y\}\in E\}$. 
 We denote by $ \mathcal{G} $ the collection of connected graphs $ G=(\Z,E) $ satisfying  properties (A1) and (A2).   

We remark that expressing the vertex set of $G$ as $\mathbb{Z}$ implies a specific labeling of the graph. A single graph topology can generally be mapped to $\mathbb{Z}$ via different labelings, leading to different representations of the edge set $E$; however, the intrinsic dynamics and the global Type ($\mathcal{M}$ or $\mathcal{F}$) are invariant under graph isomorphism.

  \begin{definition}
   	\label{icutset}
A \emph{cut} $C = (S,T)$ is a partition of the vertices $V$ of a graph $G = (V,E)$ into two
 subsets $S$ and $T$. The \emph{cut-set} of  the cut $C = (S,T)$ is the set 
$$
E_C = \{ \{u,v\} \in E : u\in  S, v \in T\}
$$ 
of edges that have one endpoint in $S $ and the other endpoint in $T$. 
An 
 \emph{icut-set} is a 
cut-set of a cut 	$C = (S,T)$ having $|S| = |T| = \infty$. 
A \emph{minimal icut-set} is an  icut-set having minimal cardinality. 
   \end{definition}
Given  $ \sigma\in \Sigma $, we define 
\begin{equation*}
	\partial\sigma:= \bigl\{\{x,y\}\in E:\sigma(x)\neq \sigma(y)\bigr\}.
\end{equation*}
We can write $ \partial\sigma=E_C $ where $ C=(S,T) $ with $S=  \{ v \in V : \sigma(v) = -1 \}$ and $T=  \{ v \in V : \sigma(v) = +1 \}$. Now, given an  $ I(G,p) $-model  $ (\sigma_t)_{t\geq 0} $, by standard arguments, it is easy to show that if $ |\partial \sigma_0|< \infty  $, then $ |\partial \sigma_t| $ is a non-increasing random variable with respect to $ t $ almost surely. Moreover, if $ \partial \sigma_0 $ is a minimal icut-set, then $ |\partial \sigma_t| $ is a constant function on $ t $, i.e. $ \partial \sigma_t $ is a minimal icut-set for any $ t $. 	
Given a vertex $ v\in S $, we set $C' = (S', T')$ where $S' =  S \setminus  \{v\}$ and $T' = T \cup  \{v\}$.

The  changes in energy at vertex $ v\in V $ can be written as 
   \begin{equation*}
   	\Delta\mathcal{H}_v(\sigma)=2\sum_{\substack{u \in V:\\ \{u,v\} \in E}}\sigma(u)\sigma(v)= 
2(|E_{C'} \setminus E_C| - |E_{C} \setminus E_{C'}|) .
   \end{equation*}

 \begin{definition}[right and left deformability]
   	\label{admod}
Given a graph $G = (V,E)$ we say that a minimal icut-set of the cut  $C = (S,T)$    is \emph{right deformable}  if there exists a sequence of distinct vertices $(v_i )_{i \in \N}$  such that $\bigcup_{i \in \N} \{v_i\} \supset T $ and, 
for each $n \in \N$, the  icut-set of the cut 
\begin{equation}\label{def:defor} C_n  =\left  (S  \cup ( \bigcup_{i =1}^n  \{v_i\} )  ,T\setminus ( \bigcup_{i =1}^n \{v_i\} ) \right )  
\end{equation} is minimal. We say that the minimal icut-set $ E_C $ is left deformable if the above property holds with the roles of $ S $ and $ T $ exchanged.
   \end{definition}

 \begin{definition}[one-step right deformability]
   	\label{admod2}
Given a graph $G = (V,E)$ we say that a minimal icut-set of the cut  $C = (S,T)$    is one-step  right deformable  if there exists a vertex  $v \in T$  such that 
 the  icut-set of the cut $\displaystyle  C' =\left  (S  \cup  \{v\}  ,T\setminus \{v\}  \right )  $ is minimal.
   \end{definition}
It is immediate to notice that the right deformability implies the one-step right deformability.

We note that there is a correspondence between the fact that in $ G $ there is a right deformable minimal icut-set and having a spin flip with positive probability in the $ I(G,p) $-model. Indeed, if $ E_C $ is a minimal icut-set in $ G $, we consider $ \sigma \in \Sigma $ such that $ \partial\sigma=E_C $ as above. If in addition $ E_C $ is right deformable, then there is a sequence of vertices $(v_i )_{i \in \N}$  such that $\bigcup_{i \in \N} \{v_i\} = T $ and, for each $n \in \N$, $ E_{C_n} $ is minimal. Thus, given $ (\sigma_t)_{t\geq 0} $ an  $ I(G,p) $-model 
with $ \sigma_0=\sigma $, since $ \partial \sigma_t $ is a minimal icut-set for any $ t $, the ordered  sequence of vertices $(v_i )_{i \in \N}$  can be flipped from $ +1 $ to $ -1 $ (or vice versa) with positive probability.

For a connected graph $ G=(\Z,E)\in \mathcal{G}$,  
it follows that every minimal icut-set can be obtained by translating a finite collection of representative icut-sets. We are ready to prove the following lemma.

 \begin{lemma}
   	\label{l:pequiv}
   	Let $ G=(\Z,E)\in \mathcal{G}$.
The following five propositions are equivalent:   	

\begin{itemize}
\item[1.] $G$ has the shrink property.
 \item[2.] There exists a  minimal icut-set that is right deformable. 
 \item[3.] There exists a  minimal icut-set that is left deformable.
\item[4.] All minimal icut-sets  are right deformable.
\item[5.] Every minimal icut-set  $ E_{C} $ is one-step right deformable. 
\end{itemize} 	
   \end{lemma}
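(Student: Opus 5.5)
I will prove the cycle of implications $4\Rightarrow 2\Rightarrow 5\Rightarrow 1\Rightarrow 4$, and treat $3$ by symmetry, using the reflection $x\mapsto -x$. This reflection maps $G$ to a graph in $\mathcal{G}$ with the same period $L$ and range $K$; it exchanges the roles of $S$ and $T$ in every cut, and the shrink property is invariant under it. The preliminary observation is that, for $G\in\mathcal{G}$, any icut-set of finite size must have $S$ and $T$ each containing one of the two ends. Connectedness and the finite range $K$ force all but finitely many of the vertices far to the left to lie on the same side, and likewise far to the right. Hence a minimal icut-set of cardinality $m$ separates $(-\infty,a]\subset S$ from $[b,\infty)\subset T$ (or the reverse), with $b-a$ bounded in terms of $m$ and $K$. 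Up to translation by $L\mathbb{Z}$ there are finitely many minimal icut-sets.

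\textbf{Step $4\Rightarrow 2$.} This is trivial once minimal icut-sets exist. Taking $S=(-\infty,0]$ gives an icut-set that is finite because of the range $K$, so a minimum exists.

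\textbf{Step $2\Rightarrow 5$.} Suppose some minimal $E_C$ is right deformable along $(v_i)$. Every cut $C_n$ is minimal, and after finitely many steps the left boundary has moved past any prescribed point. By the finiteness of translation classes and pigeonhole, $C_n$ and $C_{n'}$ with $n<n'$ are translates of each other by a positive multiple of $L$. So each translation class visited infinitely often has a minimal one-step move available. I then need to show that every class admits such a move. My approach: given an arbitrary minimal cut $D$, compare it with some $C_n$ lying to its left, i.e.\ with $S_{C_n}\subset S_D$. Then use submodularity of the cut function,
\[
|E_{(S_1\cup S_2,\cdot)}|+|E_{(S_1\cap S_2,\cdot)}|\le |E_{(S_1,\cdot)}|+|E_{(S_2,\cdot)}|.
\]
Applying this to $S_D$ and $S_{C_n}\cup\{v_{n+1}\}$ shows that $S_D\cup\{v_{n+1}\}$ is minimal whenever $v_{n+1}\notin S_D$. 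Choosing $n$ to be the last index with $S_{C_n}\subset S_D$ gives a vertex to add to $D$. Both the union and the intersection above are icut-sets, since both of their sides are infinite, so minimality gives equality in the inequality.

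\textbf{Step $5\Rightarrow 1$.} Argue by contradiction: suppose $F$ is a finite stable set. Take a minimal cut $C$ whose $T$ contains $F$, then push it right by repeated one-step moves. To guarantee that the moves eventually sweep through $F$, I first show, again via submodularity and finiteness of classes, that iterating one-step moves yields a full right deformation. At the first time the moving boundary enters $F$, consider $S_n\cup F$ versus $S_n$. Stability of $F$ means each vertex of $F$ has strictly more neighbours inside $F$ than outside. Starting from the first vertex of $F$ added, the quantity $|E_{C_{n}}|$ must then strictly increase at some step while $F$ is partially absorbed, contradicting minimality of every cut in the sequence. Equivalently, and more cleanly, I would use submodularity: $|E_{(S\cup F)}|+|E_{(S\cap F)}|$ versus $|E_{S}|+|E_{F}|$, where $F$ stable implies $|\partial F\cap\cdots|$ is small, and derive that $S_n\setminus F$ is a strictly smaller icut-set.

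\textbf{Step $1\Rightarrow 4$.} Let $E_C$ be minimal. Take the finite set $W$ of vertices of $T$ within distance $K$ of $S$ together with those beyond it up to a period. Apply the shrink property to increasing finite sets $T\cap[b,N]$. I expect that by an energy/zero-temperature argument (the quantity $|E_C|$ is non-increasing under moves adding a vertex $u$ with $\deg_{S}(u)\ge\deg_T(u)$), one can always find a vertex whose addition keeps the cut minimal. The shrink property supplies a vertex $u$ of the finite set $T\cap[b,N]$ with $\deg_{\text{outside}}(u)\ge \deg_{\text{inside}}(u)$. Choosing $N$ large and far from $u$, the vertices outside near $u$ lie in $S$, so adding $u$ does not increase the cut. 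Iterate, enumerating vertices so that every vertex of $T$ is eventually absorbed.

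The main obstacle is this last step, together with the sweeping in $5\Rightarrow1$. The vertex $u$ supplied by the shrink property might sit near the far end $N$ rather than near $S$, where its outside neighbours are in $T$. My fix is to apply the shrink property to the finite set $T\cap[b,N]$ in the graph where $[N+1,\infty)$ is temporarily regarded as part of $S$. I would then use periodicity to shift back, and use minimality to rule out a strictly decreasing move near $N$.
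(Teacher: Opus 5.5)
\paragraph{Where the proof fails.} Your steps $4\Rightarrow2$, $2\Rightarrow5$ and $5\Rightarrow1$ are sound. In $2\Rightarrow5$ you should first translate the deforming sequence by a multiple of $L$ so that $S_{C_0}\subset S_D$. In $5\Rightarrow1$ the contradiction already occurs at the first absorbed vertex $u\in F$: then $F\setminus\{u\}\subset T_n$ and $S_n\subset V\setminus F$, so the cut grows by at least $\deg_F(u)-\deg_{V\setminus F}(u)>0$.

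The genuine gap is $1\Rightarrow4$. It is the only arrow leading from the shrink property back to deformability, so without it neither your cycle nor your reflection argument for 3 closes. Your proposed fix does not remove the obstacle you identified. Regarding $[N+1,\infty)$ as part of $S$ changes nothing, because the shrink property only controls $\deg_{V\setminus F}(u)$, however the outside is labelled. Suppose you take the far boundary to be a translate $C+kL$ of a minimal cut, so that minimality excludes strictly decreasing moves there. A vertex $u$ near that wall with $\deg_{V\setminus F}(u)\ge\deg_F(u)$ then only gives a zero-cost move of $u$ from $S_{C+kL}$ to $T_{C+kL}$. That is a one-step \emph{left} deformation of $C+kL$, not a right move of $C$. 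Periodicity cannot convert one into the other, because the local picture at the far wall is the mirror one. So one application of the shrink property to a finite region only yields ``every minimal icut-set is one-step right or one-step left deformable''.

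\paragraph{What is missing.} You need the paper's two-wall argument, which is its proof of $1\Rightarrow5$ by contraposition. Suppose $\bar C$ has no one-step right move and $\hat C$ has no one-step left move. Then for $k$ large the finite set $\bar T\cap(\hat S+kL)$ is stable: vertices near the left wall, near the right wall, and in the middle each have strictly more neighbours inside. This contradicts the shrink property.

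To produce $\hat C$ from $\bar C$ you need one more ingredient that your proposal lacks, which is the reversal trick of the paper's $2\Rightarrow3$.
\begin{enumerate}
\item Assume every minimal icut-set were one-step left deformable, and iterate left moves.
\item By pigeonhole on translation classes, the sequence contains cuts $D_j=D_i-cL$ with $c>0$.
\item Running the moves from $D_j$ to $D_i$ backwards, and repeating by periodicity, gives a right deformation of $D_j$.
\item Your submodularity step then gives $\bar C$ a one-step right move, a contradiction.
\end{enumerate}
Reflection $x\mapsto -x$ does not supply this step by itself. It turns left moves in $G$ into right moves in a different graph, and the implication you would need there is again ``shrink property $\Rightarrow$ deformability''.
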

   \begin{proof}
   	By hypothesis, there exists $L$ such that $ G $ is  invariant   by  translation of $ L $.

   	2. $\implies $ 3. We start with a right deformable minimal icut-set $E_{C_0}$ associated with the cut $C_0  = (S, T)$.  Let  $(v_i )_{i \in \N}$ be a sequence of distinct vertices as in Definition \ref{admod}. Since the number of non-equivalent cut configurations modulo $L$ is finite under minimal icut-sets, there must exist a pair of distinct indices $i_1, i_2 \in \mathbb{N}$ (with $i_1 < i_2$) such that the boundary profiles are identical up to a geometric shift, meaning $C_{i_2} = C_{i_1} + cL$ for some $c \in \mathbb{N}$.
By translation invariance and induction, it follows that
also $E_{ C_{i_1} }$ is a right deformable minimal icut-set. Now, consider the sequence 
$(u_\ell = v_{i_2 +1-\ell }: \ell \in \{1, \dots, i_2 - i_1\})$ and  let us define,  for $\ell \in \{1, \dots, i_2 - i_1\}$, 
$$
\tilde C_{\ell } = C_{i_2-\ell +1}   = \left  (\tilde S    \setminus ( \bigcup_{i =1}^\ell \{u_i\} )    , \tilde T  \cup ( \bigcup_{i =1}^\ell   \{u_i\} )   \right ) , 
$$ 
where $ \tilde S = S \cup ( \bigcup_{i =1}^{i_2}  \{v_i\} ) $ and 
$ \tilde T = \mathbb{Z} \setminus \tilde S$.  
Clearly, $E_{\tilde C_{\ell}}$ is a minimal icut-set for each $\ell \in \{1, \dots, i_2 - i_1 + 1\}$.
Now,   $\tilde C_{i_2-i_1 +1} =  C_{i_1} = C_{i_2} -cL = \tilde C_1 -cL $ and $ E_{\tilde C_{i_2-i_1 +1}}=E_{C_{i_1}}=E_{C_{i_2}}-cL=E_{\tilde C_1}-cL $ with $ c\in \N $. 
That  implies  the left deformability.
\medskip 

3. $\implies $ 2. This implication can be proved exchanging the roles of $ S$ and $T$.
\medskip 

2. $\implies $ 4. By hypothesis, there exists a minimal icut-set $ E_C $ with $ C=(S,T) $ that is right deformable. Given another minimal icut-set $ E_{\tilde{C}} $ with $ \tilde{C}=(\tilde{S},\tilde{T}) $, we need to show that it is also right deformable. If there exists $ m\in \Z $ such that $ \tilde{C}=C+mL $, then there is nothing to prove and, by translation invariance of $ G $, $ E_{\tilde{C}} $ is right deformable. 
Now suppose that there is no $ m\in \Z $ such that $ \tilde{C}=C+mL $.
Without loss of generality, we can  assume $ T\supset \tilde{T} $.  
We define $ \sigma_0, \tilde{\sigma }_0 \in \Sigma $  as 
\begin{equation*}
	\sigma_0 (v) = \begin{cases}
		-1 & \text{ if } v\in S \\ 
		+1 & \text{ if } v\in T
	\end{cases} \quad \text{and} \quad 
    \tilde{\sigma }_0 (v) = \begin{cases}
    	-1 & \text{ if } v\in \tilde{S} \\ 
    	+1 & \text{ if } v\in \tilde{T}.
    \end{cases}
\end{equation*}

By attractivity of the dynamics and since $ \sigma_0\leq \tilde{\sigma}_0 $, one can couple the systems in such a way that at each time of the dynamics the inequality is maintained, i.e. $ \sigma_t\leq \tilde{\sigma}_t $, for each $t \in \N$.

By hypothesis, there exists a sequence of distinct vertices $(v_i)_{i \in \N}$ such that $\bigcup_{i \in \N} \{v_i\} \supset T \supset \tilde{T}$, as in Definition \ref{admod}. Since this sequence of updates is valid for $\sigma_0$, by the established coupling, one can apply the exact same sequence to successfully update $\tilde{\sigma}_0$.

\medskip

4.    $ \implies $ 2. This implication is trivial.
\medskip

4.     $ \implies $ 5. This implication is immediate, in fact the right deformability implies the one-step right deformability.
\medskip

5. $ \implies $ 2.   We consider a minimal icut-set $E_C $ related to $C =(S, T)$ and, by the one-step right deformability, we construct the new minimal icut-set $E_{C'} $ related to $C' =(S \cup \{v\}, T\setminus \{v\})$. Since the number of non-equivalent minimal icut-sets modulo translation is finite, by the Pigeonhole Principle, after a finite number of steps we must encounter a translation of the same minimal icut-set. 
Therefore, there    exists a pair of distinct indices $i_1 , i_2 \in \{0, \ldots , n_G\}$ (with $i_1< i_2$) such  that 
   	$C_{i_2} = C_{i_1} +mL  $  for some $m  \in \N$.  That  implies $E_{C_{i_1}}$ is right deformable.

\medskip 

1. $ \implies $ 5. 
We prove the contrapositive statement, namely that if property 5 does not hold, then property 1 cannot hold.
By hypothesis there exists a minimal icut-set $E_{\bar C} $ with $ \bar{C}=(\bar{S},\bar{T}) $ ($E_{\hat C} $ with $ \hat{C}=(\hat{S},\hat{T}) $ resp.) that is not one-step right (left resp.) deformable. Without loss of generality, we can choose an integer $k$ large enough such that $ \hat{T}\subset \bar{T}+2kL+1$ and the two cuts do not interact. By construction, the set $ \bar{T}\setminus \hat{T} $ is stable. This implies that $ G $ does not have the shrink property, i.e. $\neg $  1.
\medskip

4. $ \implies $ 1. Let $ U\subset V $ be a finite set. Given $ E_{C} $ a minimal icut-set of the cut $ C=(S,T) $ such that $ U\subset T $. We define $ \sigma_0, \sigma'_0 \in \Sigma $  as 
\begin{equation*}
	\sigma_0 (v) = \begin{cases}
		+1 & \text{ if } v\in T \\ 
		-1 & \text{ if } v\in S
	\end{cases} \quad \text{and} \quad 
	\sigma_0^{\prime} (v) = \begin{cases}
		+1 & \text{ if } v\in U \\ 
		-1 & \text{otherwise}.
	\end{cases}
\end{equation*}
 Hence $ \sigma_0^{\prime}\leq \sigma_0 $. Now the argument of the proof continues by the same coupling given in item 2. $ \implies $ 4.
 
\medskip

This concludes the proof.
\end{proof}

The following result is a natural extension of the properties discussed in Lemma~\ref{l:pequiv}  
and could conceptually be regarded as its sixth property. However, given its crucial algorithmic value for practically deciding whether a graph possesses the shrink property, we state it here as a standalone proposition.

\begin{proposition}[Algorithmic criterion via Attractivity and Monotonicity]
\label{p:algorithmic_shrink}
Let $G=(V,E)\in\mathcal{G}$ be a quasi-transitive graph with translation period $L$. Let $\sigma_{0}\in\Sigma$ be the initial step-like configuration where all vertices in the left half-line are set to $-1$ and all vertices in the right half-line are set to $+1$. The graph $G$ possesses the shrink property if and only if there exists a finite sequence of valid flips (i.e., transitions from $+1$ to $-1$ with local energy variation $\Delta\mathcal{H}\le0$) starting from $\sigma_{0}$ that reaches a configuration $\sigma_{\tau}$ satisfying 
\begin{equation}
\sigma_{\tau} \le \tau_{L} \sigma_{0},
\end{equation}
where $\tau_L$ denotes the spatial translation operator by one period $L$ towards the right. 
\end{proposition}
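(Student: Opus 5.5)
We reduce both directions of the statement to Lemma~\ref{l:pequiv} combined with attractivity of the dynamics. We use the convention that $\sigma_0(x)=-1$ for $x\le 0$ and $+1$ for $x>0$, so that $\tau_L\sigma_0(x)=-1$ for $x\le L$ and $+1$ for $x>L$. A valid flip is a $+1\to-1$ change at a site $v$ with $\Delta\mathcal{H}_v\le 0$, i.e.\ $v$ has at least as many $-1$ neighbours as $+1$ neighbours. Note that $|\partial\sigma_0|<\infty$ because the range $K$ is finite. Moreover, each valid flip changes $|\partial\sigma|$ by $\Delta\mathcal H_v/2\le 0$, so $|\partial\sigma|$ never increases along a sequence of valid flips.

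\emph{Necessity.} Assume $G$ has the shrink property. We first argue that $\partial\sigma_0$ may be replaced by a minimal icut-set. Choose any finite sequence of valid flips that strictly lowers $|\partial\sigma|$ while keeping both phases infinite. Such a sequence can be found by translating a minimal icut-set to lie at the far right, or equivalently by using the fact that a non-minimal step boundary admits an energy-decreasing move. After such a sequence we reach a configuration $\eta$ with $\eta\le\sigma_0$ and with $\partial\eta$ a minimal icut-set $E_C$, $C=(S,T)$. By item~4 of Lemma~\ref{l:pequiv}, $E_C$ is right deformable. Following the deformation sequence $(v_i)$ flips vertices of $T$ to $-1$ one at a time; each such flip preserves minimality, hence satisfies $\Delta\mathcal H\le 0$ and is valid. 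Since $\bigcup_i\{v_i\}\supset T$ and $T\cap(-\infty,L]$ is finite, after finitely many steps $\tau$ every vertex $\le L$ is $-1$. This gives $\sigma_\tau\le\tau_L\sigma_0$.

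The delicate point is the reduction to a minimal icut-set. A cleaner way to handle it is to take a minimal cut $C=(S,T)$ with $T\subset(0,\infty)$ and $(-\infty,-K]\subset S$, and then compare via attractivity as in the proof of $2\Rightarrow4$ in Lemma~\ref{l:pequiv}. Since $\sigma_0\le\sigma_C$, running the deformation sequence of $C$ on $\sigma_0$ makes every listed flip of $\sigma_0$ either valid or vacuous. Indeed, more $-1$ neighbours only lowers $\Delta\mathcal H$ for a $+1\to-1$ flip, so monotonicity transfers validity downward. We would verify this carefully; it is the main technical step.

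\emph{Sufficiency.} Suppose a finite valid sequence takes $\sigma_0$ to $\sigma_\tau\le\tau_L\sigma_0$. Since $\tau_L\sigma_0\le\tau_L\sigma_\tau$ would follow from a comparison, we instead iterate. Apply the translated sequence $\tau_L(\cdot)$ starting from $\sigma_\tau$. By attractivity, i.e.\ the downward transfer of validity just described, since $\sigma_\tau\le\tau_L\sigma_0$, each translated flip is valid or vacuous. This yields $\sigma_{2\tau}\le\tau_{2L}\sigma_0$, and by induction all of $\Z$ can be swept to $-1$ by valid flips.

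Now let $U$ be any finite set and suppose, for contradiction, that $U$ is stable. Translate $U$ into $(0,\infty)$ and consider the configuration $\sigma'=-1$ off $U$, so that $\sigma'\le\sigma_0$. Transferring the sweeping sequence to $\sigma'$ gives valid flips that eventually flip some first vertex $u\in U$. At that moment all $+1$ sites of the current configuration lie in $U$. The flip is valid, so $u$ has at least as many $-1$ neighbours as $+1$ neighbours. Since every $+1$ neighbour of $u$ lies in $U$, this gives $\deg_{V\setminus U}(u)\ge\deg_U(u)$, contradicting stability. Hence no finite stable set exists, and $G$ has the shrink property.

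The main obstacle throughout is to state and check the monotone-transfer lemma for one-sided, $\Delta\mathcal H\le0$ flips, and to handle in the necessity direction the fact that $\partial\sigma_0$ need not be minimal.
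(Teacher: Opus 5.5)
Your sufficiency direction is correct and follows the paper's route: iterate the $L$-translated sequence, using that a valid $+1\to-1$ flip stays valid in any smaller configuration. Your first-flip argument with $\sigma'$ equal to $+1$ on $U$ and $-1$ off $U$ makes rigorous the paper's one-line claim that the sweep ``prevents the existence of any finite stable blocking structures.'' (The stray inequality $\tau_L\sigma_0\le\tau_L\sigma_\tau$ is backwards, but you never use it.)

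The gap is in the necessity direction, where neither of your two routes works as written.

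\emph{First route.} It is unsupported. Nothing guarantees that a non-minimal step boundary admits a strictly energy-lowering $+1\to-1$ flip, since such a boundary can be a local minimum for single flips. So you cannot assume you reach some $\eta\le\sigma_0$ with $\partial\eta$ a minimal icut-set.

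\emph{Second route.} The inclusion is backwards. If $T\subset(0,\infty)$, then $\sigma_C$ has fewer $+1$ spins than $\sigma_0$, so $\sigma_C\le\sigma_0$, not $\sigma_0\le\sigma_C$. Downward transfer then carries valid flips from $\sigma_0$ to $\sigma_C$, which is useless here. What you need is a minimal cut with $S\subset(-\infty,0]$, i.e.\ $T\supset(0,\infty)$. Such a cut exists. Because $E_C$ is finite and $G$ is connected, $L$-periodic and of range $K$, one side of the cut contains a left half-line and the other a right half-line, each up to a finite set. After possibly swapping the names $S,T$ (the cut-set is unchanged) and translating by a multiple of $L$, you get $S\subset(-\infty,0]$.

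With this choice the argument goes through:
\begin{itemize}
\item $\sigma_0\le\sigma_C$, and $E_C$ is right deformable by item 4 of Lemma~\ref{l:pequiv}.
\item Whenever $v_n$ is still $+1$ in $\sigma_{C_{n-1}}$, flipping it keeps the cut minimal, so $\Delta\mathcal H_{v_n}(\sigma_{C_{n-1}})=0$.
\item On the coupled configuration $\eta_{n-1}\le\sigma_{C_{n-1}}$, the flip at $v_n$ is either vacuous or satisfies $\Delta\mathcal H_{v_n}(\eta_{n-1})\le\Delta\mathcal H_{v_n}(\sigma_{C_{n-1}})=0$. Hence it is valid, and $\eta_n\le\sigma_{C_n}$ is preserved.
\item Since $(0,L]\subset T\subset\bigcup_i\{v_i\}$ and $(0,L]$ is finite, after finitely many steps you get $\eta_\tau\le\tau_L\sigma_0$.
\end{itemize}
No reduction of $\partial\sigma_0$ to a minimal icut-set is needed.

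Once corrected, your necessity argument differs from the paper's, which argues informally from the absence of finite stable sets and the finiteness of boundary profiles modulo $L$. Your comparison with a right-deformable minimal cut via Lemma~\ref{l:pequiv} is the more explicit way to ensure the $-1$ front never stalls before sweeping $(0,L]$.
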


\begin{proof}
The proof relies on the attractivity (monotone coupling) and translation invariance of the zero-temperature dynamics.

First, suppose such a configuration $\sigma_{\tau}$ is reachable via valid flips. The relation $\sigma_{\tau} \le \tau_{L} \sigma_{0}$ implies that the front of $-1$ spins has fully advanced past the first period $L$, completely erasing any $+1$ spins behind it up to that threshold. By translation invariance, since $\sigma_{0}$ can be transformed into $\sigma_{\tau}$ via a valid sequence of flips, the translated configuration $\tau_{L}\sigma_{0}$ can be transformed into $\tau_{L}\sigma_{\tau}$ using the exact same sequence of flips shifted by $L$. Crucially, by attractivity, replacing $+1$ spins with $-1$ spins can only decrease or preserve the local energy cost $\Delta\mathcal{H}$ for any subsequent $+1 \to -1$ flip. Since $\sigma_{\tau} \le \tau_{L} \sigma_{0}$, the sequence of flips that is valid for $\tau_{L}\sigma_{0}$ is also fully valid starting from $\sigma_{\tau}$. Applying this sequence drives the system to a configuration $\sigma_{2\tau} \le \tau_{L}\sigma_{\tau} \le \tau_{2L}\sigma_{0}$. By induction, the $-1$ spins can invade the right half-line indefinitely, which prevents the existence of any finite stable blocking structures. Thus, $G$ possesses the shrink property.

Conversely, if $G$ has the shrink property, there are no finite stable sets capable of trapping the interface. In any exploration strategy (e.g., the greedy strategy), the width of the interface remains bounded because the number of cut edges cannot increase under valid flips. Since the number of non-isomorphic boundary profiles modulo $L$ is finite, the interface must advance indefinitely to the right. Therefore, the front will eventually cross the translation threshold $L$ in a finite number of steps $\tau$, yielding $\sigma_{\tau} \le \tau_{L}\sigma_{0}$.
\end{proof}

\begin{remark}[The Greedy Exploration Algorithm]
\label{r:greedy_algorithm}
Proposition~\ref{p:algorithmic_shrink} directly justifies a simple, finite-time greedy algorithm to check the shrink property:
\begin{enumerate}
    \item \textbf{Initialization:} Set the system to the initial step configuration $\sigma_{0}$. Identify the set of active boundary vertices (i.e., vertices with spin $+1$ that have at least one neighbor with spin $-1$).
    \item \textbf{Greedy Flip Step:} At each iteration, select an active vertex $v$ whose flip from $+1$ to $-1$ is valid ($\Delta\mathcal{H}_v \le 0$). To optimize convergence, one can choose a vertex that maximizes the energy reduction $-\Delta\mathcal{H}_v$ (steepest descent). Update the configuration.
    \item \textbf{Termination Conditions:}
    \begin{itemize}
        \item \textbf{Failure (No Shrink Property):} If at some point no valid $+1 \to -1$ flips are available for any active boundary vertex ($\Delta\mathcal{H}_v > 0$ everywhere on the front), the algorithm terminates. The current boundary constitutes a non-deformable icut-set (a stable wall), meaning the shrink property is false.
        \item \textbf{Success (Shrink Property Holds):} If the configuration $\sigma_{\tau}$ satisfies $\sigma_{\tau} \le \tau_{L} \sigma_{0}$ (i.e., all vertices within the first period $L$ have successfully flipped to $-1$), the algorithm terminates immediately. Monotonicity and induction guarantee that the invasion will continue infinitely, meaning the shrink property is true.
    \end{itemize}
\end{enumerate}
Since the interface width is strictly bounded under valid flips and the graph is quasi-transitive, the algorithm is guaranteed to hit one of these two termination conditions in a finite number of steps.
\end{remark}

\section{Type $\mathcal{I}$ Classification and Arithmetic Lattices}
\label{s:type_I_classification}

Having thoroughly characterized the shrink property from both a structural and an algorithmic perspective, we begin this section by establishing its central role in the stochastic dynamics of the system. The following major result connects this geometric condition directly to the Type $\mathcal{I}$ classification, demonstrating that the shrink property completely dictates the global behavior of the model at the symmetric density $p=1/2$.

\begin{theorem} \label{t:main result d1}
If $G=(\Z,E)\in \mathcal{G}$, then the $I(G,1/2)$-model is of Type $\I$ if and only if $G$ has the shrink property.
\end{theorem}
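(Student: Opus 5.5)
The proof splits into the two implications.

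The easy direction is the contrapositive: if $G$ fails the shrink property, then the model is not of Type $\I$. Without the shrink property there is a finite stable set $S\subset\Z$. As remarked after Definition~\ref{d:stable}, with positive probability $\sigma_0\equiv +1$ on $S$. Every $u\in S$ then has strictly more neighbours in $S$ than outside $S$. Since all spins in $S$ agree, each such $u$ has $\Delta\mathcal H_u>0$ at every time, so no vertex of $S$ ever flips. By translation invariance (A1), the translates $S+kmL$, for $m$ large enough that they are disjoint, give an ergodic family of independent events. Almost surely some translate freezes, so some vertex fixates almost surely and the model is not of Type $\I$. In fact, by ergodicity, a positive density of vertices fixate.

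For the converse, assume the shrink property and show that a fixed vertex $v$ flips infinitely often almost surely. The plan follows the usual Nanda--Newman--Stein scheme:
\begin{itemize}
\item[(a)] It suffices to show that for every $t$, the conditional probability given $\mathcal F_t$ that $v$ flips after time $t$ is bounded below by a constant $\delta>0$. A Lévy $0$--$1$ argument then gives infinitely many flips.
\item[(b)] Use a symmetry/ergodicity argument. By the spin-flip symmetry at $p=1/2$ and the translation ergodicity of the joint law of initial configuration and clocks, it is enough to exclude fixation of $v$ to $+1$ and, symmetrically, to $-1$.
\end{itemize}

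The key geometric input is Lemma~\ref{l:pequiv}, items 1 and 4: every minimal icut-set is right and left deformable, with a uniformly bounded number of flips per period by Proposition~\ref{p:algorithmic_shrink}. Concretely, suppose that with positive probability $v$ fixates at $+1$. By ergodicity, the set of vertices fixating at $+1$ has positive density, and so has the set of vertices fixating at $-1$.

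Fix a large time $t$ and a window $[a,b]$ containing $v$. With probability bounded below, the configuration looks like a $-1$ region to the left, reachable through finitely many tie-flips and clock rings. The aim is to use finite-energy local modifications of the clocks and coin tosses in a bounded neighbourhood of $v$ to build, with probability at least $\delta$, a step-like $-/+$ interface to the left of $v$. The deformation sequence of Lemma~\ref{l:pequiv} then drives this interface rightwards through $v$ using only zero-energy or energy-lowering flips. Each deformation step occurs with conditional probability at least $\tfrac12\cdot$(a Poisson factor), and only a bounded number of steps (depending on $L$ and $K$) is needed to cross $v$. This yields the uniform $\delta$.

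The main obstacle is the local modification producing the interface. Unlike nearest-neighbour $\Z$, the dynamics are not monotone in a simple way, and the surrounding configuration is arbitrary. I would handle it using attractivity exactly as in the step 2 $\Rightarrow$ 4 of Lemma~\ref{l:pequiv}. The true configuration is compared with the step configuration $\sigma_0$ of Proposition~\ref{p:algorithmic_shrink}, placed at a location where the actual configuration is already all $-1$ on a long block of length at least $2K+L$. Such a block exists at finite distance almost surely, by the positive density of $-1$-fixated sites or at time $0$ by independence. Along the deforming sequence, the ordering $\sigma_t\le$ (deformed step) forces $v$ to turn $-1$. The symmetric argument with $+1$ blocks then shows $v$ returns to $+1$, giving infinitely many flips. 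The delicate point is controlling the distance to such a block uniformly, so that $\delta$ does not degenerate. I would try to handle this by stopping-time conditioning on the nearest such block within distance $R$, choosing $R$ so that the block exists with probability at least $1/2$.
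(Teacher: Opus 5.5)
Your necessity direction is correct, and more self-contained than the paper, which just cites Theorem~2 of \cite{DL2023}. Ergodicity is not even needed there: one vertex fixating with positive probability already excludes Type~$\I$.

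The sufficiency direction has a genuine gap exactly where you place the ``delicate point'', and your remedy does not close it. The reduction (a) is valid, but it needs $\mathbb{P}_{1/2}(v\text{ flips after }t\mid\mathcal F_t)\ge\delta$ almost surely, i.e.\ for essentially every configuration at time $t$.
\begin{itemize}
\item Your mechanism drives an interface from a $-1$ block into $v$ with controlled clocks. This costs a number of flips proportional to the distance $d$ between the block and $v$: Proposition~\ref{p:algorithmic_shrink} bounds the flips per period, not the total. So you only get a bound of order $c^{d}$, and $d$ is unbounded given $\mathcal F_t$.
\item Under coarsening, $d$ need not even stay tight as $t\to\infty$, so no single $R$ works uniformly in $t$.
\item Restricting to the event ``a block lies within distance $R$'' gives only an unconditional bound. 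Via reverse Fatou and conditional Borel--Cantelli this yields at best $\mathbb{P}_{1/2}(v\text{ flips i.o.})\ge\epsilon$, not $1$, and it does not rule out Type~$\mathcal M$.
\item The comparison $\sigma_t\le$ (deformed step) would require $\sigma_t\equiv-1$ on the entire left half-line. Only a local version with silenced clocks is available.
\item Your sources for the block do not work. $-1$-fixated sites are not $\mathcal F_t$-measurable and need not form runs of length $2K+L$. A $-1$ block present at time $0$ says nothing about time $t$, since finite blocks are exactly what the shrink property lets the dynamics erode.
\end{itemize}

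The missing idea, which is how the paper argues, is to separate an $r$-independent frequency from an $r$-dependent success probability.
\begin{itemize}
\item Let $U_{r,K}(0)=I_{r+K}(0)\setminus I_r(0)$ be the two layers of thickness $K$ around $I_r(0)$. By FKG and spin-flip symmetry at $p=1/2$, $U_{r,K}(0)$ is all $+1$ at time $t$ with probability at least $(1/2)^{2K}$, uniformly in $t$ and $r$.
\item On that event, every $-1$ vertex lies in $I_r(0)$, so all its neighbours lie in $I_{r+K}(0)$. Applying the shrink property step by step to this finite $-1$ set, with all other clocks in $I_{r+K}(0)$ silenced, fills $I_{r+K}(0)$ with $+1$ within unit time with conditional probability $\delta_r>0$.
\item Reverse Fatou and L\'evy's conditional Borel--Cantelli then give $\mathbb{P}_{1/2}(I_{r+K}(0)\text{ is all }+1\text{ infinitely often})\ge(1/2)^{2K}$ for every $r$.
\item If the model were not of Type~$\I$, symmetry and ergodicity under $L\Z$ would make $I_r(0)$ contain a vertex eventually frozen at $-1$ with probability tending to $1$ as $r\to\infty$. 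For large $r$ this is incompatible with the previous bound.
\end{itemize}
This interval-level contradiction, rather than a uniform conditional bound at the single vertex $v$, is what delivers probability one and excludes Type~$\mathcal M$ as well as Type~$\mathcal F$.
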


We now explain the strategy for proving this theorem. Given a symmetric interval $I_r(0)$ centered at $0$, we can identify two disjoint boundary sets of fixed thickness $K$, namely $\{-r-K,\dots,-r-1\}$ and $\{r+1,\dots,r+K\}$. Under the initial symmetric product measure $\mathbb{P}_{1/2}$, each vertex independently takes the value $+1$ or $-1$ with probability $1/2$. Thus, at time $t=0$, the probability that these two boundary sets are entirely $+1$ is exactly $\tilde{p} = (1/2)^{2K}$, which is uniformly bounded away from zero and independent of $r$. By the attractivity of the Glauber dynamics and the FKG inequality, this uniform spatial lower bound is preserved for any positive time $t > 0$.

By the Reverse Fatou Lemma, this uniform-in-time lower bound implies that this favorable boundary configuration occurs infinitely often in time with a probability of at least $\tilde{p}$. Whenever it occurs, the shrink property ensures that the $+1$ spins have a strictly positive conditional probability $\delta_r > 0$ to invade and fully occupy the interior interval $I_r(0)$ within a unit time window. Crucially, while a larger $r$ lowers the frequency $\delta_r$ of these successful invasions, a conditional version of the Second Borel-Cantelli Lemma (Lévy's extension) guarantees that the invasion succeeds infinitely often with a probability that inherits the uniform lower bound $\tilde{p}$, independent of $r$.

To extract a contradiction, we exploit a key structural property proved in \cite{DL2023}: if the model is not of Type $\mathcal{I}$ (i.e., fixation occurs), translation invariance implies that there exists at least one equivalence class (orbit) of vertices under $L\mathbb{Z}$ that has a positive probability $p_{\text{static}} > 0$ of never flipping its spin from time zero. Since this orbit is periodic and has positive density in $\mathbb{Z}$, spatial ergodicity implies that a sufficiently large interval $I_r(0)$ must contain at least one vertex belonging to this orbit that starts at $-1$ and remains $-1$ for all times with overwhelming probability. This directly contradicts our uniform bound, which forces the \emph{entire} interval to be wiped clean to $+1$ infinitely often.

\begin{proof}[Proof of Theorem \ref{t:main result d1}]
By Theorem 2 in \cite{DL2023}, if the $I(G,1/2)$-model is of Type $\I$, then $G$ has the shrink property. Therefore, it remains to prove the converse implication.

We assume that $G$ has the shrink property and recall the interaction range $K = \max_{x,y\in \Z:\{x,y\}\in E}|x-y|$. We define the symmetric interval $I_{r}(0) := \{x\in \Z: |x|\leq r\}$ and its boundary sets $U_{r,K}(0) := I_{r+K}(0)\setminus I_{r}(0)$. For $t \geq 0$, let $E^{t}_{K} := \bigcap_{x\in U_{r,K}(0)}\{\sigma_t(x)=+1\}$. By the attractivity of the dynamics and the FKG inequality, we have $\mathbb{P}_{1/2}(E^{t}_{K}) \geq (1/2)^{2K} > 0$. By the Reverse Fatou Lemma, we obtain:
\begin{equation} \label{e: limsup Ur lower bound}
    \mathbb{P}_{1/2}\Bigl(\limsup_{t\to \infty} E^{t}_{K}\Bigr) \geq \limsup_{t\to \infty}\mathbb{P}_{1/2}(E^{t}_{K}) \geq \biggl(\frac{1}{2}\biggr)^{2K} =: \tilde{p} > 0.
\end{equation}

For $t \geq 0$, we define the interval-filling event $F_{r,K}^{t} := \bigcup_{s\in (t,t+1)}\bigcap_{x\in I_{r+K}(0)}\{\sigma_s(x)=+1\}$. Since $G$ has the shrink property, Lemma \ref{l:pequiv} implies that all minimal icut-sets are right and left deformable. Thus, the $+1$ boundaries can advance inward via a finite sequence of energy-neutral or energy-decreasing flips, yielding a constant $\delta_{r}>0$ such that $\mathbb{P}_{1/2}(F^{t}_{r,K} \mid \sigma_t=\sigma) \geq \delta_{r}$ for any $\sigma\in \Sigma$ satisfying $\{\sigma_t=\sigma\} \subset E^{t}_{K}$.

Let $B_{r,K} := \limsup_{t\to \infty}F^t_{r,K}$. Since $E^{t}_{K}$ occurs infinitely often with probability at least $\tilde{p}$, and at each such instance the conditional probability of triggering $F^{t}_{r,K}$ is bounded below by $\delta_r > 0$, Lévy's extension of the Second Borel-Cantelli Lemma implies that $F^t_{r,K}$ occurs infinitely often almost surely on the event $\limsup_{t\to \infty} E^{t}_{K}$. This immediately yields:
\begin{equation*}
    \mathbb{P}_{1/2}(B_{r,K}) \geq \mathbb{P}_{1/2}\Bigl(\limsup_{t\to \infty} E^{t}_{K}\Bigr) \geq \tilde{p} > 0,
\end{equation*}
where $\tilde{p}$ is strictly positive and independent of $r$.

To conclude, suppose by contradiction that the model is not of Type $\mathcal{I}$. By the results in \cite{DL2023}, the assumption of fixation implies that there exists an orbit of vertices under the translation group $L\mathbb{Z}$, say $V_0 \subset V$, and a constant $p_{\text{static}} > 0$ such that any $v \in V_0$ has a positive probability of never changing its initial spin. Conditional on the initial state being $-1$, each $v \in V_0$ has a probability of at least $p_{\text{static}}/2$ of remaining $-1$ for all $t \geq 0$. 

Since the joint measure of the initial configuration and the graphical construction is mixing under $L\mathbb{Z}$-translations, the spatial ergodicity of the system implies that the probability that $I_r(0)$ contains \emph{no} vertex in $V_0$ that stays $-1$ forever decays to zero as $r \to \infty$. Therefore, we can choose $r$ large enough such that $I_r(0)$ contains at least one such eternal $-1$ vertex with a probability strictly greater than $1 - \tilde{p}$. 

Since the event $B_{r,K}$ (the entire interval becoming $+1$ infinitely often) and the existence of a vertex within $I_r(0)$ that never flips from $-1$ are mutually exclusive, the sum of their probabilities cannot exceed $1$. However, by construction, their probabilities sum to at least $\tilde{p} + (1 - \tilde{p}) + \epsilon > 1$, which yields the desired contradiction. 

Following the exact mapping of parameters as in Lemma 9 and Theorem 4 of \cite{DL2023} (substituting $F_{L}^{t}$, $p_{\text{cross}}$, $B(0,r)$, $L_0$, and $T_{L_0+2r_1(a)}$ with $F_{r,K}^{t}$, $\tilde{p}$, $I_{r}(0)$, $\tilde{r}$, and $I_{\tilde{r}}(0)$), we formalize this contradiction. This completes the proof.
\end{proof}

\begin{remark}[Extension of the sufficiency condition to time-dependent temperatures]
We remark that the proof of the sufficiency condition in Theorem~\ref{t:main result d1} (i.e., that the shrink property implies Type~$\I$ behavior) does not strictly require the temperature to be identically zero, nor does it depend on a specific decay rate for a time-dependent temperature profile $T(t) \geq 0$. Since the Glauber dynamics remains attractive for any non-negative temperature, the FKG-based uniform lower bounds and the conditional Borel-Cantelli argument hold unconditionally, forcing the system into Type~$\I$ whenever the graph has the shrink property. The general problem of analyzing Ising-like stochastic dynamics under time-dependent temperature profiles and investigating their convergence or fixation properties has also been extensively addressed in \cite{CDS18}.

Conversely, the necessity condition (Type~$\I$ $\implies$ shrink property) is strictly tied to the zero-temperature (or fast-cooling) regime. Indeed, if the temperature remains high, thermal fluctuations trivially induce infinite flips everywhere regardless of the graph's geometry, rendering the shrink property no longer necessary for Type~$\I$ behavior.
\end{remark}

\begin{remark}[Comparison with two-dimensional models]
As highlighted in the proof, the probabilistic arguments used to establish Theorem~\ref{t:main result d1} conceptually follow the approach developed by De Santis and Lelli \cite{DL2023} for graphs embedded in $\mathbb{R}^2$. However, the present work introduces two fundamental differences. 

First, in our one-dimensional quasi-transitive setting, the geometric condition (the shrink property) is supported by an explicit algorithmic framework that makes it decidable in finite time. 

Second, while the analysis in \cite{DL2023} focused primarily on establishing the Type $\mathcal{I}$ behavior, our framework allows us to further classify the models that lack the shrink property. Specifically, in the following sections, we will introduce an algorithmic procedure to rigorously decide between Type $\mathcal{F}$ and Type $\mathcal{M}$ behaviors, a distinction that was not addressed for the two-dimensional models in \cite{DL2023}.
\end{remark}

\subsection{Arithmetic Lattice Graphs and the Shrink Property}
\label{ss:arithmetic_lattices}

To illustrate the geometric underpinnings of the shrink property and to show that this condition is satisfied by a rich and infinite class of non-trivial structures, we introduce a family of graphs generated by interlaced arithmetic progressions on $\mathbb{Z}$.

\begin{definition}[Arithmetic Lattice Graph]
\label{def:arithmetic_lattice}
Let $G = (\mathbb{Z}, E)$ be an infinite graph whose edge set $E$ is generated by a finite collection of anchor-period pairs $\mathcal{P} = \{(v_1, d_1), \dots, (v_M, d_M)\} \subset \mathbb{Z} \times \mathbb{N}^+$. Two distinct vertices $x, y \in \mathbb{Z}$ are adjacent in $G$ if and only if there exists a pair $(v_i, d_i) \in \mathcal{P}$ and an integer $k \in \mathbb{Z}$ such that 
\begin{equation}
    \{x, y\} = \{v_i + k d_i, \, v_i + (k+1) d_i\} \, .
\end{equation}
\end{definition}

Notice also that a key intrinsic property of any Arithmetic Lattice Graph is that every vertex sees an equal number of edges propagating to its left and to its right. Formally, if we define the disjoint left and right half-lines centered at $x$ as $L_x := \{y \in \mathbb{Z} : y < x\}$ and $R_x := \{y \in \mathbb{Z} : y > x\}$, the directional degrees satisfy $\deg_{L_x}(x) = \deg_{R_x}(x)$ for all $x \in \mathbb{Z}$.

Without loss of generality, we assume that the system is globally connected by requiring that $\gcd(d_1, \dots, d_M) = 1$. Indeed, if this condition is not met, the graph naturally decomposes into a disjoint union of connected components, and the resulting Glauber dynamics can be studied independently on a single component.

\begin{proposition}
\label{prop:arithmetic_shrink}
Any connected Arithmetic Lattice Graph satisfies the shrink property.
\end{proposition}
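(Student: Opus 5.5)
The plan is to verify the definition of the shrink property directly, using the balance identity $\deg_{L_x}(x)=\deg_{R_x}(x)$ recorded just before the statement. The natural witness is an extremal vertex. Let $S\subset\mathbb{Z}$ be an arbitrary finite nonempty set and put $u:=\max S$. Every vertex $y>u$ lies outside $S$, so $R_u\subset V\setminus S$ and
\[
\deg_{V\setminus S}(u)\;\ge\;\deg_{R_u}(u).
\]
On the other hand $S\setminus\{u\}\subset L_u$, so
\[
\deg_S(u)\;\le\;\deg_{L_u}(u).
\]
Combining these with the balance identity gives $\deg_{V\setminus S}(u)\ge \deg_{R_u}(u)=\deg_{L_u}(u)\ge \deg_S(u)$. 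This is exactly the inequality required by the shrink property. Equivalently, $G$ has no finite stable set.

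The real content is the balance identity, which the paper only states, so I would prove it carefully. Here $\deg$ counts distinct neighbours, not edge multiplicities. For a fixed pair $(v_i,d_i)$, the pair contributes edges at $x$ only when $x\equiv v_i \pmod{d_i}$, and then it contributes exactly one left neighbour $x-d_i$ and one right neighbour $x+d_i$.

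The obstacle is multiplicity. Several pairs can produce the same neighbour, for instance two pairs with the same period and congruent anchors. The fix is to group pairs by their period $d$. The set $N_x$ of neighbours of $x$ is the image of the index set $\{d_i : x\equiv v_i \pmod{d_i}\}$ under $d\mapsto x-d$ on the left and $d\mapsto x+d$ on the right. Both maps are injective in $d$ and use the same index set. Hence $|N_x\cap L_x|=|N_x\cap R_x|$, with no loops since $d_i\ge1$.

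Connectedness, via $\gcd(d_1,\dots,d_M)=1$, is not needed for the inequality. It is needed only so that $G\in\mathcal{G}$, which I would check by taking $L=\mathrm{lcm}(d_i)$ and $K=\max d_i$, so that Theorem~\ref{t:main result d1} applies. As a cross-check, the same idea yields item 5 of Lemma~\ref{l:pequiv}: for a step-like cut, flipping the leftmost $+1$ vertex never increases the cut size.
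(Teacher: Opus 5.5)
Your proof is correct, but it takes a different route from the paper. The paper never checks the definition directly. It starts from the step configuration $\sigma_0$ ($-1$ on $u\le 0$, $+1$ on $u>0$) and uses the balance identity to see that flipping $x=1,2,\dots,L$ in turn is a sequence of energy-neutral moves ending at $\tau_L\sigma_0$. It then invokes the algorithmic criterion of Proposition~\ref{p:algorithmic_shrink}. You instead verify the definition itself at the extremal vertex $u=\max S$, with no dynamics and no appeal to that proposition.

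The mechanism underneath is the same. If you unpack the paper's route through the coupling in $4\Rightarrow 1$ of Lemma~\ref{l:pequiv}, the left-to-right sweep first meets a finite set at its minimum, which is the mirror image of your $\max S$. Your version buys several things:
\begin{itemize}
\item It is self-contained. The paper's proof is only as rigorous as Proposition~\ref{p:algorithmic_shrink}, whose passage from indefinite invasion to the absence of finite stable sets is only sketched.
\item It is more general. It uses neither periodicity nor connectedness, and it shows that any graph on $\mathbb{Z}$ with $\deg_{R_x}(x)\ge\deg_{L_x}(x)$ for every $x$ has the shrink property. The reverse inequality for every $x$ works too, taking $\min S$.
\item Your grouping of pairs by period proves the balance identity, including coincident edges generated by different pairs. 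The paper only asserts this identity.
\end{itemize}
What the paper's route buys is illustration: it is an explicit successful run of the greedy exploration of Remark~\ref{r:greedy_algorithm}.

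Two side remarks, neither of which affects your argument:
\begin{itemize}
\item $\gcd(d_1,\dots,d_M)=1$ does not by itself give connectedness: for $\mathcal{P}=\{(0,2),(0,3)\}$ the vertex $1$ is isolated. Connectedness is therefore a genuine hypothesis of the statement.
\item Your closing cross-check, as phrased, concerns only step-like cuts. Item 5 of Lemma~\ref{l:pequiv} is a statement about every minimal icut-set.
\end{itemize}
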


\begin{proof}
We establish the statement by directly applying the monotone algorithmic criterion from Proposition~\ref{p:algorithmic_shrink}. Let $\sigma_0 \in \Sigma$ be the initial step configuration where $\sigma_0(u) = -1$ for $u \le 0$ and $\sigma_0(u) = +1$ for $u > 0$.

Consider the first vertex $x = 1$. Under the initial step configuration $\sigma_0$, all its left neighbors in $L_1$ carry spin $-1$, while all its right neighbors in $R_1$ carry spin $+1$. Since $x=1$ currently has spin $+1$, flipping its spin to $-1$ changes the local energy by an amount proportional to the difference between its right and left degrees, namely $\deg_{R_1}(1) - \deg_{L_1}(1)$. Because $\deg_{L_1}(1) = \deg_{R_1}(1)$, the local energy variation is exactly $\Delta \mathcal{H}_1(\sigma_0) = 0$. According to the flip rates \eqref{e:our rates}, this transition is strictly valid.

Since $x=1$ behaves as a generic boundary vertex, the exact same argument applies sequentially to each subsequent vertex $x = 2, \dots, L$ as the boundary shifts. Thus, we can execute a finite sequence of $L$ valid energy-neutral flips to reach the configuration $\sigma_L$, where $\sigma_L(u) = -1$ for $u \le L$ and $\sigma_L(u) = +1$ for $u > L$. By inspection, $\sigma_L = \tau_L \sigma_0$, which satisfies the condition of Proposition~\ref{p:algorithmic_shrink} for $\tau = L$ and completes the proof.
\end{proof}

It is worth noting that while Arithmetic Lattice Graphs provide a transparent and highly symmetric family of examples, they do not exhaust the class of structures possessing the shrink property. Indeed, one can easily construct non-arithmetic graphs in $\mathcal{G}$—for instance, by locally modulating the edge lengths or interlacing asymmetric periodic patterns—that still retain enough structural balance to satisfy the algorithmic criterion of Proposition~\ref{p:algorithmic_shrink}.

\section{Absence of the Shrink Property: Type $\mathcal{F}$ and Type $\mathcal{M}$ Behaviors}
\label{sec:type_F_M}

In the previous section, we established that the shrink property is the geometric counterpart of Type $\mathcal{I}$ behavior (for the symmetric density $p=1/2$). We now turn our attention to quasi-transitive graphs that do \emph{not} possess the shrink property. 

As previously discussed, the lack of the shrink property implies the existence of finite stable sets (local minima) that can trap the zero-temperature dynamics, locally freezing the system and thereby preventing global infinite fluctuations. Consequently, the model cannot be of Type $\mathcal{I}$ and must exhibit either Type $\mathcal{F}$ behavior (where all vertices eventually fixate almost surely) or  Type $\mathcal{M}$ behavior (a mixed regime where some regions fixate while others flip infinitely often).

Before introducing the explicit algorithmic methodology (based on an auxiliary spatial automaton) needed to rigorously distinguish between Type $\mathcal{F}$ and Type $\mathcal{M}$, we first formally prove a foundational result. The following lemma shows that the absence of the shrink property guarantees that any finite set of vertices fixates with strictly positive probability, and notably, this holds for any initial density $p \in (0,1)$.

\begin{lemma}
	\label{l:all-fix}
	Let $ G=(\Z,E)\in \mathcal{G}$ be a graph lacking the shrink property and let $ p\in (0,1) $. For each finite subset $ U\subset \Z $, one has
	\begin{equation*}
		\mP_{p}\biggl(\bigcap_{u\in U}\{\text{$ u $ fixates}\}\biggr)>0 \, .
	\end{equation*}
\end{lemma}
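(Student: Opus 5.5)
Since $G$ lacks the shrink property, there is a finite stable set $S\subset\Z$: every $u\in S$ satisfies $\deg_S(u)>\deg_{V\setminus S}(u)$. The plan is to exploit that a stable set initially of constant sign can never flip, and to cover $U$ by such sets. Stability is invariant under the translations $x\mapsto x+mL$ by (A1). Moreover, a union of stable sets is stable, because for $u\in S_1$ we have $\deg_{S_1\cup S_2}(u)\ge \deg_{S_1}(u)>\deg_{V\setminus S_1}(u)\ge \deg_{V\setminus(S_1\cup S_2)}(u)$. Hence the finite set
\[
W:=\bigcup_{m\in M}(S+mL), \qquad M \text{ finite},
\]
is again a finite stable set. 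The first difficulty is that $U$ need not be contained in any translate of $S$, since $S$ may miss some residue classes modulo $L$.

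\textbf{Step 1: controlling vertices of $U$ outside the stable set.} I would not require $U\subset W$. Instead I would sandwich $U$ between two blocks that are frozen at the same sign. First take $W_1\subset(-\infty,\min U-K)$ and $W_2\subset(\max U+K,\infty)$, each a translate of $S$, and set $W=W_1\cup W_2$. Then consider the initial event
\[
A:=\{\sigma_0\equiv +1 \text{ on } W\}\cap\{\sigma_0\equiv +1 \text{ on } J\},
\]
where $J$ is the full integer interval between $W_1$ and $W_2$. Then $\mP_p(A)\ge p^{|W\cup J|}>0$.

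\textbf{Step 2: $W$ stays $+1$ forever.} On $\{\sigma_0\equiv+1 \text{ on } W\}$, every $u\in W$ has more neighbours inside $W$ than outside. As long as $W$ is all $+1$, every $u\in W$ therefore has $\Delta\mathcal H_u>0$ and flip rate $0$. By induction over the (a.s. well-ordered, locally) update times of the graphical construction, no vertex of $W$ ever flips, so $W$ fixates at $+1$ regardless of what happens outside.

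\textbf{Step 3: fixation of the vertices between the walls.} The gap $J$ is finite and its outer neighbours within distance $K$ are, on the relevant side, only the frozen $W$ or other vertices of $J$. I would argue that the set $J\cup W$ need not itself be stable, and this is the main obstacle: vertices of $J$ near $W$ may still have neighbours beyond $W$ (edges of length up to $K$ can jump over $W$ if $W$ is thin). To handle this, I would thicken each wall to $K$ consecutive translates so that its span exceeds $K$, making the walls separate $J$ from the rest of the graph. Then the dynamics restricted to $J$ is an autonomous finite-state zero-temperature dynamics with frozen $+1$ boundary. Starting from all $+1$ on $J$, no vertex of $J$ has a $-1$ neighbour, so each has $\Delta\mathcal H>0$, and by the same induction $J$ never flips. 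Consequently every $u\in U\subset J$ fixates on $A$, which gives the claim with probability at least $p^{|W\cup J|}>0$.

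With the thick walls in place, $J\cup W$ is simply an all-$+1$ block whose only external influence comes from beyond the walls, so in fact Step 2's argument applies directly to $J$ as well. The one point to verify carefully is that a $+1$ vertex whose neighbours are all $+1$ has $\Delta\mathcal H>0$; this holds because $G$ is connected, so each vertex has degree at least $1$.
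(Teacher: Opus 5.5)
\textbf{There is a genuine gap in Step~3: walls built from translates of an arbitrary finite stable set $S$ do not insulate $J$.}

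Your Step~2 is correct: a finite stable set that is monochromatic at time $0$ never flips. So are the facts that stability survives translation by $L$ and unions. The gap is exactly the difficulty you flagged at the outset and then dropped.

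A wall made of translates of $S$ contains only vertices in the residue classes mod $L$ that $S$ meets. Adding more translates lengthens its span but never fills the remaining classes. So the thickened wall still has holes: vertices inside its span that lie neither in $W$ nor in $J$, are not constrained by $A$, and are protected by no stability inequality. Edges from $J$ can end in these holes. Hence the claims that the walls ``separate $J$ from the rest of the graph'' and that ``no vertex of $J$ has a $-1$ neighbour'' are false in general, and with them the conclusion that $J$ never flips.

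Concretely, take $L=5$ and let each $T_i=\{5i,5i+1,5i+2\}$ be a triangle. Join $5i+3$ to $5i+2$, $5i-1$ and $5i-6$, and join $5i+4$ to $5i+5$. This graph is in $\mathcal{G}$ and $S=T_0$ is stable. With $W_1=T_a\cup\dots\cup T_b$, however, the first vertex $5b+3$ of $J$ has two of its three neighbours, $5b-1$ and $5b-6$, among the holes. With positive probability on $A$ both are $-1$ when the clock of $5b+3$ first rings, and then $5b+3$ flips. Putting the holes into $A$ does not help, since holes near the outer edge of a wall see the outside and can flip themselves. In this particular example the erosion happens to stop at the wall, but nothing in your argument bounds how far erosion through the classes missed by $S$ can penetrate $J$ towards $U$.

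\textbf{What is missing} is a finite stable set that contains $U$. It would suffice to have one meeting every residue class mod $L$. A long union of its translates then contains $K$ consecutive integers. With two such solid walls, $W$ together with all integers strictly between their solid parts is a finite stable set, and your Step~2 applies to it.

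The bare failure of the shrink property does not hand you such a set; the paper extracts it from Lemma~\ref{l:pequiv}. Failure of the shrink property yields a minimal icut-set $E_{\bar C}$ that is not one-step right deformable and one, $E_{\hat C}$, that is not one-step left deformable. That is, $\deg_{\bar T}(v)>\deg_{\bar S}(v)$ for all $v\in\bar T$, and $\deg_{\hat S}(v)>\deg_{\hat T}(v)$ for all $v\in\hat S$.

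Since $\bar T$ and $\hat S$ contain half-lines, after translating the two cuts far apart the cage $\bar T\cap\hat S$ is finite and contains $U$. No vertex is within range $K$ of both cut zones, so each vertex of the cage inherits one of the two strict inequalities, and the cage is stable. This is the paper's $V_{\text{cage}}=T_1\cap S_2$, which is frozen once it is monochromatic at time $0$.
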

\begin{proof}
Since $G$ does not possess the shrink property, Lemma~\ref{l:pequiv} implies the existence of two blocking structures: a minimal icut-set that is not left-deformable (preventing $+1$ spins from invading a region of $-1$ spins moving to the left) and a minimal icut-set that is not right-deformable (preventing invasion moving to the right).

Let $U \subset \mathbb{Z}$ be any arbitrary finite subset of vertices. By the quasi-transitivity of the graph under the translation group $L\mathbb{Z}$, we can translate these two blocking boundaries sufficiently far to the left and to the right of $U$. Specifically, if we denote by $\partial C_1$ and $\partial C_2$ the finite sets of vertices incident to the edge cuts $E_{C_1}$ and $E_{C_2}$ respectively, we choose the translation vectors such that:
\begin{enumerate}
    \item $U$ is fully contained in the finite subset of vertices $V_{\text{cage}} := T_1 \cap S_2$.
    \item The spatial distance between the boundary zones $\partial C_1$ and $\partial C_2$ is strictly greater than $2K$.
\end{enumerate}

Now, consider an initial configuration where all vertices inside $V_{\text{cage}}$ are set to $-1$, while all vertices in $\mathbb{Z} \setminus V_{\text{cage}}$ are set to $+1$. By the attractivity of the Glauber dynamics, maximizing the $+1$ spins outside $V_{\text{cage}}$ represents the worst-case scenario for the survival of the internal $-1$ domain. Under the product measure $\mathbb{P}_p$ with $p \in (0,1)$, this local cylinder event occurs with strictly positive probability since $V_{\text{cage}}$ is a finite set of vertices.

Because the spatial distance between $\partial C_1$ and $\partial C_2$ is strictly greater than $2K$, any vertex $v \in \mathbb{Z}$ can be within the interaction range $K$ of at most one of the two blocking boundaries. This geometric separation ensures that the local neighborhood of any vertex near an interface behaves exactly as it would in an isolated non-deformable icut-set, completely unaffected by the other boundary. Since these icut-sets are non-deformable by hypothesis, no vertex can undergo a valid flip from $-1$ to $+1$, rendering this worst-case configuration strictly absorbing for the zero-temperature dynamics.

By monotonicity, the $-1$ spins inside $V_{\text{cage}}$ remain completely protected against $+1$-invasions regardless of any other configuration outside, and thus all vertices in $U$ remain frozen at $-1$ for all $t \geq 0$ with positive probability, meaning they fixate. A perfectly symmetric argument with reversed spins shows that $U$ can also fixate at $+1$, which completes the proof.
\end{proof}

Having established in Lemma~\ref{l:all-fix} that any finite region can freeze with positive probability, we now characterize the global topological constraints on the non-fixating regions. 
The following proposition shows that the absence of the shrink property leads to a complete spatial decoupling of the infinite fluctuations and guarantees that the classification of the system is insensitive to the initial density $p$.

\begin{proposition}[Global decoupling and insensitivity to $p$]
    \label{p:no_shrink_props}
    Let $G=(\Z,E)\in \mathcal{G}$ be a graph lacking the shrink property, and consider the $I(G,p)$-model with $p\in (0,1)$. Then:
    \begin{enumerate}
        \item[(i)] \textbf{Insensitivity to $p$:} The system is globally of Type $\mathcal{M}$ (or Type $\mathcal{F}$) for some $p \in (0,1)$ if and only if it is of Type $\mathcal{M}$ (or Type $\mathcal{F}$) for all $p \in (0,1)$.
        \item[(ii)] \textbf{Spatial decoupling under Type $\mathcal{M}$:} If the model is of Type $\mathcal{M}$, then almost surely every connected component of the subgraph induced by the set of vertices that flip infinitely often has strictly finite cardinality.
    \end{enumerate}
\end{proposition}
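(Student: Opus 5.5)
The plan is to reduce both claims to one structural fact: when $G$ lacks the shrink property, the line is cut into independent finite ``cells'' by \emph{walls}. A wall is a local pattern of spins, supported in a window of length $O(K+L)$, which by Lemma~\ref{l:all-fix} and its proof forms a frozen non-deformable double interface. Concretely, take the pair of non-one-step-deformable minimal icut-sets from the proof of Lemma~\ref{l:pequiv} (1.$\Rightarrow$5.), translated so that they are separated by more than $2K$. The configuration that is $-1$ on the cage and $+1$ outside is absorbing. By attractivity, the same holds for any configuration that agrees with it on a bounded neighbourhood $W$ of the cage: the cage remains $-1$ forever, whatever happens outside. By symmetry the same is true with the spins reversed. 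Since a window of $K$ consecutive frozen vertices separates $\mathbb{Z}$ (every edge has length at most $K$), the dynamics on either side of a wall evolves autonomously once the wall is present.

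\textbf{Step 1 (walls appear and are dense).} Under $\mathbb{P}_p$ with $p\in(0,1)$, the cylinder event that $W+kL$ carries the wall pattern at time $0$ has probability $c_p>0$, the same for every $k\in\mathbb{Z}$. By independence of the product measure along disjoint windows, almost surely infinitely many walls occur on both sides of the origin. Every vertex therefore lies in a finite segment $J$ between two consecutive walls. Inside $J$ the evolution is a finite-state Markov chain with frozen boundary conditions. By the Poisson clocks, a.s.\ it eventually enters a closed class of the finite chain. This class is either absorbing, in which case all sites of $J$ fixate, or recurrent with several states, in which case some sites flip infinitely often.

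\textbf{Step 2 (proof of (ii)).} The walls never flip, so every connected component of the set of infinitely-flipping vertices lies in a single segment $J$. Such a component cannot use an edge crossing a wall. An edge from $J$ to a vertex beyond the wall can only reach the frozen window, because the wall has width at least $K$. Hence each component is finite. This is immediate from Step 1 and requires no Type-$\mathcal{M}$ hypothesis beyond the absence of the shrink property.

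\textbf{Step 3 (proof of (i)).} The key observation is that the law enters only through which finite patterns have positive probability, and for $p\in(0,1)$ every finite pattern does. Define $\mathcal{B}$ as the set of finite ``wall--segment--wall'' initial patterns $\eta$ from which, with positive probability, the frozen finite chain reaches a recurrent class containing a non-fixating site. This set is independent of $p$.

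If $\mathcal{B}\neq\emptyset$, then by independence some translate of a pattern in $\mathcal{B}$ occurs a.s., and in fact infinitely often. Each occurrence is an independent trial for the internal randomness, using the clocks in disjoint regions, so a.s.\ some vertex flips infinitely often. Combined with Lemma~\ref{l:all-fix}, which gives fixating vertices with positive probability, and ergodicity under $L\mathbb{Z}$, which upgrades positive probability to almost surely, the model is of Type $\mathcal{M}$.

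If $\mathcal{B}=\emptyset$, then every segment fixates a.s., and the model is of Type $\mathcal{F}$.

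The dichotomy therefore does not depend on $p$. Type $\mathcal{I}$ is excluded by Lemma~\ref{l:all-fix}.

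\textbf{Main obstacle.} The delicate point is Step 1's claim that a wall located by its time-$0$ pattern stays frozen regardless of its surroundings. I must check that the absorbing property in Lemma~\ref{l:all-fix} is genuinely local. The argument would go as follows. For a vertex outside the cage but within distance $K$ of it, the neighbourhood in the worst case (all $+1$ outside) is exactly that of the non-deformable interface. For vertices farther out, their behaviour does not affect the cage except through those boundary vertices. The monotone coupling then handles arbitrary exterior configurations. If needed, I would enlarge $W$ so that it contains the $K$-neighbourhood of both interfaces as frozen $+1$ layers, and verify directly that these layers cannot flip to $-1$.
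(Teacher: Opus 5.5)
Your proposal is correct and follows essentially the same route as the paper: frozen walls present at time $0$ as positive-probability cylinder events, almost surely infinitely many of them, a decomposition of $\mathbb{Z}$ into finite autonomous segments that gives (ii), and (i) from the $p$-independence of the set of finite oscillation-producing patterns, each of which has positive probability for every $p\in(0,1)$ (your version also does without the paper's bound $L_0$ on the width of a fluctuating environment). The locality worry in your last paragraph goes away because the cage is a finite stable set, so once it is monochromatic it stays frozen whatever the exterior does; you can drop both the claim that the ``$-1$ on the cage, $+1$ outside'' configuration is absorbing and the proposed frozen $+1$ layers, neither of which need hold.
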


\begin{proof}
    By Lemma~\ref{l:all-fix}, any finite blocking region (composed of two non-deformable boundary cuts separated by a distance strictly greater than $2K$ and set to a uniform absorbing spin) occurs with strictly positive probability under the initial product measure $\mathbb{P}_p$. By spatial ergodicity along the quasi-transitive lattice, infinitely many such independent blocking structures are almost surely realized at time $t=0$. Let $\mathcal{W} \subset \mathbb{Z}$ be the union of all such blocking regions guaranteed to exist at $t=0$ by the lemma. By construction, the individual components of $\mathcal{W}$ are \emph{stable sets} (in the sense of Definition~\ref{d:stable}), acting as impenetrable ``stable walls'' from the very beginning of the dynamics ($t=0$) without requiring any cooperative spin alignments.
    
    Consider the remaining vertices of the graph, $\mathbb{Z} \setminus \mathcal{W}$. Because Lemma~\ref{l:all-fix} allows us to choose blocking structures whose individual spatial width is strictly greater than $2K$, these walls completely disrupt any long-range interaction of range $K$ crossing through them. Since there are almost surely infinitely many such walls distributed throughout the graph under the initial product measure $\mathbb{P}_p$, the subgraph induced by $\mathbb{Z} \setminus \mathcal{W}$, denoted by $G[\mathbb{Z} \setminus \mathcal{W}]$, decomposes into a collection of disjoint connected components, say $(C_j)_{j \in J}$, where each remaining connected component $C_j$ is almost surely finite.
    
    For each finite component $C_j$, let $\partial C_j \subset \mathcal{W}$ denote its local boundary, which consists of the finite number of wall vertices that can interact with $C_j$. Since the vertices in $\mathcal{W}$ are frozen from time $t=0$, no vertex inside $C_j$ can ever be influenced by any vertex outside $C_j \cup \partial C_j$. Consequently, the dynamics inside each component $C_j$ depend solely on its own initial configuration, the static values of its boundary $\partial C_j$, and the localized realizations of the Poisson clocks and tie-breaking coin flips. This implies that the internal dynamics of the distinct components $(C_j)_{j \in J}$ are mutually independent.
    
    Since the components and their local boundaries are finite, there are only finitely many possible local environments—defined by the graph structure of $(C_j, \partial C_j)$ and their initial spin configurations—up to graph isomorphism and translations. By spatial ergodicity along the quasi-transitive lattice, for any given $p \in (0,1)$, any such finite local environment that has a strictly positive probability of occurring under $\mathbb{P}_p$ will almost surely appear infinitely often across the graph at time $t=0$.
 
To prove property (ii), suppose that the system exhibits Type $\mathcal{M}$ behavior for a given $p \in (0,1)$. By definition, the set of vertices flipping infinitely often is non-empty with positive probability. Since the stable walls in $\mathcal{W}$ are strictly frozen from time $t=0$, no vertex in $\mathcal{W}$ can ever flip. Consequently, any vertex undergoing infinite fluctuations must strictly belong to the induced subgraph $G[\mathbb{Z} \setminus \mathcal{W}]$. As previously established, $G[\mathbb{Z} \setminus \mathcal{W}]$ decomposes into a collection of disjoint, finite connected components $(C_j)_{j \in J}$. Therefore, the perpetually fluctuating vertices are strictly confined within these finite components, proving that the oscillating regions form finite, disjoint clusters, which completes the proof of (ii).
    
    Finally, we prove the insensitivity property (i). In this one-dimensional geometry with interaction range $K$, any blocking structure isolating a finite component requires only a strictly bounded number of vertices. Consequently, whether a finite component $C_j$ undergoes perpetual zero-energy fluctuations or strictly fixates depends entirely on a localized cylinder event: specifically, the initial spin configuration on $C_j$, its finite boundary $\partial C_j$, and the adjacent stable segments of $\mathcal{W}$ of length greater than $2K$. 
    
    Because the interaction range $K$ is finite, the total spatial width of such a minimal fluctuating environment is strictly bounded by a finite constant $L_0$. Let $\mathcal{S}_{\text{osc}}$ be the finite set of all admissible initial spin configurations on windows of length $L_0$ that yield infinite zero-energy flips inside the bounded window. Since any pattern $\sigma \in \mathcal{S}_{\text{osc}}$ involves only a strictly finite number of vertices, its occurrence probability under the product measure $\mathbb{P}_q$ satisfies
    \begin{equation*}
        \mathbb{P}_q(\sigma) \ge [\min(q, 1-q)]^{L_0} > 0 \quad \text{for all } q \in (0,1).
    \end{equation*}
Therefore, if the model exhibits Type $\mathcal{M}$ behavior for some initial density $p \in (0,1)$, the set $\mathcal{S}_{\text{osc}}$ must be non-empty. But if $\mathcal{S}_{\text{osc}} \neq \emptyset$, the probability of observing at least one such locally fluctuating structure across the infinite lattice $\mathbb{Z}$ is strictly $1$ under $\mathbb{P}_{p'}$ for any other density $p' \in (0,1)$. By logical contraposition, if the system is of Type $\mathcal{F}$ for one density, it must remain of Type $\mathcal{F}$ for all densities in $(0,1)$, completing the proof.
\end{proof}

Our next goal is to establish a rigorous classification procedure to determine whether perpetual localized fluctuations can actually occur. To achieve this, we introduce a static classification criterion based on the exhaustive enumeration of local configurations bounded by stable walls. While not computationally optimized, this combinatorial approach offers the most direct, transparent, and structurally elegant framework to establish decidability.

By Lemma~\ref{l:all-fix}, since the graph lacks the shrink property, the event that a finite region is enclosed within absorbing, non-deformable cages occurs with strictly positive probability under the initial product measure. Therefore, we can fix, once and for all, the internal graph structures and spin configurations of two finite stable prototypes to serve as boundary walls, denoted by $W_L$ and $W_R$. Each wall is chosen to have a strictly monochromatic spin configuration (either all $+1$s or all $-1$s) and is constructed to contain an interval of at least $K$ consecutive vertices sharing this constant sign. Because $W_L$ and $W_R$ strictly contain these $K$ consecutive frozen sites, and since the maximum interaction range of the Hamiltonian is $K$, copies of $W_L$ and $W_R$ act as permanent, impenetrable barriers that completely insulate the interior region, denoted by $\Lambda \subset \mathbb{Z}$. Explicitly, this interior $\Lambda$ consists of all vertices $v \notin W_L \cup W_R$ that lie strictly between the two frozen $K$-blocks, meaning that $v$ must be strictly greater than the largest vertex belonging to the $K$ frozen sites of $W_L$, and simultaneously strictly smaller than the smallest vertex belonging to the $K$ frozen sites of $W_R$. By varying the spatial shift of $W_R$, we can enclose interior regions $\Lambda$ of arbitrary finite length. Since the boundary spins within these enclosing $K$-blocks are guaranteed to remain frozen from time $t=0$, we only need to classify the asymptotic dynamics of the interior vertices in $\Lambda$.

\begin{definition}[Admissible Confined Configuration]
Let $\sigma$ be a finite spin configuration on $\Lambda$ strictly bounded between the stable walls $W_L$ and $W_R$. We say that $\sigma$ is an \emph{admissible confined configuration} if its interior does not admit any finite sequence of zero-energy flips ($\Delta \mathcal{H} = 0$) that unlocks a strictly energy-lowering flip ($\Delta \mathcal{H} < 0$) among the interior vertices.
\end{definition}

Any configuration failing the admissibility criterion represents a transient state that must inevitably collapse into a lower-energy basin, and is thus strictly excluded from long-time dynamical considerations.

Let $M = L \cdot 2^{2K}$ be the total number of distinct local states of length $2K$ modulo the spatial periodicity $L$. Due to the underlying graph topology, the regions immediately adjacent to the fixed walls $W_L$ and $W_R$ may present irregular, non-contiguous boundary structures. Since $W_L$ and $W_R$ are fixed prototypes, the spatial extent of these jagged boundary regions is strictly bounded by a constant $C_W$ depending only on the chosen walls. We can thus identify a fully contiguous, regular interior core $\Lambda_{int} \subset \Lambda$. To guarantee that this regular core is large enough to host the necessary structural repetitions, we define the global spatial bound as $N := 4K \cdot (M + 1) + C_W$. Let $\mathcal{A}_N$ denote the finite set of all admissible confined configurations whose total interior spatial length satisfies $|\Lambda| \le N$. Since the spin space is binary and the length is strictly bounded, the set $\mathcal{A}_N$ can be exhaustively constructed and inspected in finite algorithmic time.

\begin{theorem}[Global Decidability]
\label{thm:decidability}
Let $G \in \mathcal{G}$ be a graph lacking the shrink property, and let $\mathcal{A}_N$ be the corresponding finite set of admissible confined configurations. Then, the system is:
\begin{enumerate}
    \item[\rm (i)] of Type $\mathcal{M}$ if there exists a configuration $\sigma \in \mathcal{A}_N$ hosting at least one interior vertex capable of a zero-energy transition ($\Delta \mathcal{H} = 0$);
    \item[\rm (ii)] of Type $\mathcal{F}$ if no configuration in $\mathcal{A}_N$ hosts such a vertex.
\end{enumerate}
\end{theorem}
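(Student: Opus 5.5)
The plan has three parts: reduce the global question to finite caged regions, handle case (i) directly, and handle case (ii) by a pumping argument that shrinks any long fluctuating configuration into $\mathcal{A}_N$.

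\textbf{Reduction to cages.} By Proposition~\ref{p:no_shrink_props}, almost surely $\mathbb{Z}$ is partitioned by frozen stable walls into finite, independent components. Hence the type is determined by one question: does some finite caged configuration host a vertex that flips infinitely often with positive probability?

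\textbf{Direction (i).} Suppose $\sigma\in\mathcal{A}_N$ has an interior vertex $v$ with $\Delta\mathcal{H}_v(\sigma)=0$.
\begin{itemize}
\item Inside the cage the energy is bounded below and never increases. Any energy-lowering flip reachable through zero-energy moves is excluded by admissibility. So the process started from $\sigma$ stays in the finite set $\mathcal{R}(\sigma)$ of configurations reachable by zero-energy flips.
\item Zero-energy flips occur at rate $1/2$ and are reversible: the reverse flip is again zero-energy. So the chain restricted to $\mathcal{R}(\sigma)$ is an irreducible finite Markov chain, and it visits every state infinitely often.
\item In particular the flip at $v$ happens infinitely often, so $v$ does not fixate.
\item The pattern $W_L\,\sigma\,W_R$ is a finite cylinder event. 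It has positive $\mathbb{P}_p$-probability and, by ergodicity, appears infinitely often almost surely.
\end{itemize}
Meanwhile Lemma~\ref{l:all-fix} supplies fixating vertices. This coexistence is Type $\mathcal{M}$.

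\textbf{Direction (ii), by contraposition.} Suppose some vertex fails to fixate with positive probability. By the reduction, some caged component of arbitrary length $|\Lambda|$ has a non-fixating vertex. Its energy eventually stabilizes, so it eventually lives in the recurrent class of an admissible configuration $\sigma$ that contains a zero-energy move.
\begin{itemize}
\item If $|\Lambda|\le N$, we are done.
\item Otherwise, scan the regular core $\Lambda_{int}$ in consecutive windows of length $2K$, each recorded together with its residue mod $L$. There are only $M$ possible window states. Since the core has length at least $4K(M+1)$, pigeonhole gives two positions whose $2K$-windows coincide up to a shift by a multiple of $L$.
\item Cut out the segment between these two repeated windows and glue the ends. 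Because the interaction range is $K$ and the windows agree on $2K$ sites, every vertex keeps exactly the same neighbourhood spins. Hence $\Delta\mathcal{H}$ values, admissibility and the presence of a zero-energy vertex are all preserved.
\item Iterate until the length is at most $N$, producing an element of $\mathcal{A}_N$ with a zero-energy vertex.
\end{itemize}
This contradicts the hypothesis of (ii), so every vertex fixates almost surely: Type $\mathcal{F}$.

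\textbf{Main obstacle.} The hard step is making the surgery compatible with \emph{dynamics} rather than with a single configuration. A zero-energy move at $v$ may lie inside the excised segment. A chain of zero-energy moves unlocking an energy-lowering flip in the shortened configuration might also use sites that straddle the gluing seam.

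To handle the first issue, I would choose the repeated pair of windows so that they avoid a $2K$-neighbourhood of a chosen zero-energy vertex. This is why the factor $4K$ appears: it allows pigeonholing on one side of $v$.

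To handle the second issue, I would argue that any flip sequence in the shortened configuration lifts to the original one by periodically repeating the excised block. The same local environments then occur, and admissibility of the original configuration transfers to the shortened one.
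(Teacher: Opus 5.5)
Your outline follows the paper's proof. Part (i) uses confinement to the finite zero-energy class of an admissible configuration. Part (ii) pigeonholes $2K$-windows with equal residue mod $L$ on one side of the zero-energy vertex and excises the segment between them. Your treatment of (i), and your passage from a non-fixating vertex to an admissible configuration in a recurrent class, are fine, and more explicit than the paper.

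\textbf{The gap.} It is the one you call the second issue, and your repair does not close it. The cut preserves every static neighbourhood, so it preserves the values of $\Delta\mathcal{H}$ in $\sigma'$ itself. Admissibility, however, concerns the whole set of configurations reachable by zero-energy flips, and the cut changes that set.

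A flip sequence in $\sigma'$ can be replayed in $\sigma^*$ only as long as no flip near the seam depends on a spin already changed on the other side of it. Once that happens, the matching vertex in $\sigma^*$ sits next to the \emph{other}, unflipped copy of the window. To replay the flip, the change would have to be transported through the excised block, which need not be possible. Repeating the block periodically does not help: it gives a longer configuration, whose admissibility is no better known than that of $\sigma'$.

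The paper asserts the same step (``since no local environments were altered, $\sigma'$ remains admissible''). So you have correctly located the weak point of its argument, but you have not fixed it.

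\textbf{A counterexample to the step.} The step fails on the paper's own decorated graph of Section~\ref{s:extended_blinkers}. Inside a cage, place the following in order:
\begin{itemize}
\item a neutral stretch ($U_i=+1$, $D_i=-1$) whose backbone carries one interface $-\,|\,+$;
\item one frozen cell with $U_i=D_i=+1$, $v_i=+1$;
\item a neutral stretch carrying one interface $+\,|\,-$;
\item further frozen cells and one-interface stretches as needed to match the wall signs.
\end{itemize}
The frozen cells decouple the stretches, and each stretch is a one-interface blinker. So this $\sigma^*$ is admissible, has zero-energy vertices, and can be made as long as you like.

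The two $+$ backbone parts contain identical $2K$-windows with equal residue mod $L=7$. Nothing in the pigeonhole step prevents the chosen pair from lying one in each part. Excising between them deletes the frozen cell, and the two interfaces now share a neutral stretch. Zero-energy flips bring them together, and the backbone pattern $-\,+\,-$ gives $\Delta\mathcal{H}=-4$ at the middle spin. So $\sigma'\notin\mathcal{A}_N$, and the offending sequence has no lift to $\sigma^*$.

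\textbf{What a repair would need.} One sufficient condition for the transfer is that both windows be frozen in every configuration of the zero-energy class of $\sigma^*$. A frozen band of width $2K$ splits that class into a product of a left and a right factor, and the class of $\sigma'$ is then the product of the same factors. However, the count behind $N=4K(M+1)+C_W$ does not produce such windows. Either the pigeonhole state or the argument itself has to change.
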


\begin{proof}
We analyze the two dynamical regimes:

\medskip
\noindent \emph{Case 1: Type $\mathcal{M}$.} 
Suppose there exists a configuration $\sigma \in \mathcal{A}_N$ strictly bounded between the stable walls $W_L$ and $W_R$, such that an interior vertex $v \in \Lambda$ admits a flip with $\Delta \mathcal{H} = 0$. By definition of admissibility, executing this zero-energy flip (or any finite sequence of such flips) cannot unlock any strictly energy-lowering transition ($\Delta \mathcal{H} < 0$). Furthermore, since the boundary walls $W_L$ and $W_R$ strictly contain $K$ consecutive frozen vertices, their spins remain permanently invariant, acting as impenetrable barriers. 

Consequently, the interior vertex $v$ (or a localized cluster containing $v$) can only transition within a finite set of zero-energy configurations. Since no energy-lowering escape is possible, the dynamics remains confined to this plateau and almost surely visits every accessible state recurrently, causing $v$ to flip perpetually. Because this local environment occurs with strictly positive probability under the initial product measure by Proposition~\ref{p:no_shrink_props}, the global system necessarily exhibits Type $\mathcal{M}$ behavior.

\medskip
\noindent \emph{Case 2: Type $\mathcal{F}$.} 
We justify why examining the finite set $\mathcal{A}_N$ is sufficient to rule out perpetual fluctuations at all spatial scales. Whether an interior vertex admits a zero-energy transition depends strictly on its local spatial environment within a distance bounded by the interaction range $K$. 

Suppose, by contradiction, that every configuration in $\mathcal{A}_N$ is strictly stable against zero-energy flips, but there exists a larger admissible configuration $\sigma^*$ of interior length $|\Lambda^*| > N := 4K \cdot (M + 1) + C_W$ that hosts at least one vertex $v^*$ capable of a zero-energy transition ($\Delta \mathcal{H} = 0$). The vertex $v^*$ divides the regular interior core $\Lambda_{int}^*$ into a left segment and a right segment. An elementary arithmetic bound implies that at least one of these two segments must have a spatial length of at least $2K \cdot (M + 1)$. 

Without loss of generality, assume this condition holds for the left segment. Since this segment lies entirely within the regular core $\Lambda_{int}^*$, it consists of perfectly contiguous and periodic vertices. We can therefore partition it into $M+1$ consecutive, disjoint windows of spatial length $2K$.

By the Pigeonhole Principle, among the $M+1$ disjoint windows, there must exist at least two windows, starting at coordinates $x_A$ and $x_B$ (with $x_A < x_B$), that share the exact same local state:
\begin{enumerate}
    \item $\sigma_{[x_A, x_A+2K-1]} = \sigma_{[x_B, x_B+2K-1]}$,
    \item $x_A \equiv x_B \pmod L$.
\end{enumerate}

We perform a surgical cut-and-paste operation by excising the intermediate segment of vertices, executing the cut precisely after the first $K$ vertices of each matching window. We preserve the configuration up to vertex $x_A + K - 1$, and resume it from vertex $x_B + K$ onward. The number of removed vertices is $x_B - x_A$. Since $x_A \equiv x_B \pmod L$, the length of the excised segment is a perfect multiple of the lattice periodicity $L$, ensuring that the periodic coupling constants of the background graph align identically before and after the cut.

We now verify that the cut-and-paste operation perfectly preserves the local environments, meaning that every vertex in the reduced configuration $\sigma'$ experiences the exact same interaction field and local energy as its corresponding counterpart in the original configuration $\sigma$:
\begin{itemize}
    \item Any vertex $y \le x_A + K - 1$ to the left of the cut has a right interaction neighborhood of range $K$ extending at most up to $x_A + 2K - 1$. In $\sigma'$, the vertices following $x_A + K - 1$ are $x_B + K, \dots, x_B + 2K - 1$. Since these spins match those of $[x_A + K, x_A + 2K - 1]$ by window equivalence, the local environment of $y$ is perfectly preserved.
    \item Conversely, any vertex $y \ge x_B + K$ to the right of the cut has a left interaction neighborhood of range $K$ extending backward at most down to $x_B$. In $\sigma'$, the vertices preceding $x_B + K$ are $x_A + K - 1, \dots, x_A$. By window equivalence, these spins match those of $[x_B, x_B + K - 1]$, leaving the local environment of $y$ perfectly preserved.
\end{itemize}
Since the excision was performed entirely to the left of $v^*$, the vertex $v^*$ and its full neighborhood of range $K$ remain untouched, ensuring that $v^*$ still admits a zero-energy flip in $\sigma'$. Moreover, since no local environments were altered, $\sigma' \in \mathcal{A}_N$ remains admissible. Inductive application of this reduction whenever either segment exceeds length $2K \cdot (M+1)$ yields a shortened configuration $\sigma_{\text{short}} \in \mathcal{A}_N$ where $v^*$ still fluctuates. This directly contradicts the hypothesis that all configurations in $\mathcal{A}_N$ are stable, completing the proof.
\end{proof}

\begin{remark}[Computational Complexity and Pruning]
Note that while the theoretical upper bound $N$ is combinatorially large to guarantee decidability under any worst-case scenario, the effective search space encountered in practice is much smaller. The requirement of physical admissibility acts as an aggressive pruning filter, eliminating the vast majority of transient configurations that admit energy-lowering flips ($\Delta \mathcal{H} < 0$). Consequently, valid confined configurations are forced to exhibit structural repetitions at a much shorter effective spatial threshold $M^* \ll N$. Furthermore, classifying a system as Type $\mathcal{M}$ allows for a short-circuit termination of the algorithm the precise moment a single zero-energy transition is detected, making the criterion highly efficient for simulating fluctuating regimes.
\end{remark}

\subsection{Robustness under Fast Quenching}
\label{subsec:fast_quenching}

The structural classification into Type $\mathcal{M}$ and Type $\mathcal{F}$ regimes is not a mere artifact of the strict zero-temperature idealization; it remains robust when the system is subjected to a physical cooling process where the temperature satisfies $T(t) \to 0$ as $t \to \infty$. Following the general framework for fast quenching developed in \cite{CDS18}, we consider the stochastic Ising dynamics $(\sigma_t)_{t \ge 0}$ on $G$ governed by a time-dependent temperature profile satisfying:
\begin{equation}
    \int_0^\infty e^{-\frac{2}{T(t)}} \, dt < \infty,
\end{equation}
where $2$ represents the minimal energy quantum required for an energy-increasing transition ($\Delta \mathcal{H} > 0$).

To establish this robustness, we construct the thermal dynamics and the ideal $T=0$ dynamics on the same probability space via Harris' graphical representation. In this coupling, a thermal error (i.e., a transition violating the $T=0$ rule) occurs at any given vertex $v$ with a rate bounded by $e^{-2/T(t)}$. The fast quenching integral condition guarantees, via the Borel-Cantelli lemma, that the total number of thermal errors occurring across any finite region is almost surely finite.

Crucially, combining this finiteness with the spatial ergodicity of the initial measure and the global decoupling from Proposition~\ref{p:no_shrink_props} yields a strong localization effect: with probability one, there exists an infinite sequence of spatial barriers (stable walls) that experience exactly zero thermal errors over the entire time horizon $[0, \infty)$. These permanent, error-free barriers structurally partition the infinite lattice into a collection of finite, topologically isolated spatial clusters.

Within any such finite cluster $C$, all local thermal errors cease entirely after an almost surely finite random time $T_C < \infty$. Although specific localized fluctuations might occasionally freeze due to early thermal anomalies, the global asymptotic phase is invariant. The system preserves the existence of infinitely many finite components that either fix completely or fluctuate indefinitely, perfectly matching the strict $T=0$ algorithmic classification.

\subsection{Microscopic Behavior of Fluctuating Components}
\label{subsec:microscopic_behavior}

Having established the decidability of the global phase, we now characterize the precise microscopic behavior of the fluctuating components in Type $\mathcal{M}$ systems.

Given a global spin configuration $\sigma$, we denote its restriction to a subgraph $B$ by $\sigma_B$. We further denote the monochromatic consensus configurations on $B$ consisting entirely of $+1$ or $-1$ spins as $\mathbf{+1}_B$ and $\mathbf{-1}_B$, respectively.

\begin{proposition}[Recurrence of Blinker States]
\label{prop:blinker_recurrence}
Let $B$ be a finite set of blinkers in a Type $\mathcal{M}$ system. Under the zero-temperature dynamics, the restricted configuration $\sigma_B$ visits both consensus states $\mathbf{+1}_B$ and $\mathbf{-1}_B$ infinitely often almost surely.
\end{proposition}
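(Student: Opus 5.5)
The plan is to reduce the statement to a finite-state Markov chain argument. Since the system is of Type $\mathcal{M}$, Proposition~\ref{p:no_shrink_props}(ii) gives that the set of non-fixating vertices splits into finite components, each insulated by stable walls frozen from some almost surely finite time onward. I would interpret ``a finite set of blinkers'' $B$ as the vertex set of such a fluctuating component, together with (if needed) the frozen boundary $\partial B$ of its $K$-neighbourhood. After the last flip of every vertex in the finite set $\partial B \cup (\text{fixating vertices within distance } K \text{ of } B)$, which is an almost surely finite random time $T_0$ by definition of fixation, the process $(\sigma_B(t))_{t\ge T_0}$ is a continuous-time Markov chain on the finite space $\{\pm1\}^B$ with fixed boundary condition. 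Its jumps are the zero-temperature flips with rates in $\{0,\tfrac12,1\}$.

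Next I would analyse the structure of this finite chain. The energy restricted to $B$ (with the frozen boundary) is non-increasing along trajectories, so after a further almost surely finite time the chain has reached its lowest attained energy level and moves only through zero-energy flips. These are reversible: a flip at $v$ with $\Delta\mathcal{H}_v=0$ can be undone by flipping $v$ back, again with $\Delta\mathcal{H}_v=0$ and the same rate $\tfrac12$. The chain therefore eventually lives in one communicating class of a symmetric finite chain, hence is recurrent and visits every state of that class infinitely often. This is the same reasoning already used in Case~1 of Theorem~\ref{thm:decidability}. It remains to show that this recurrent class contains both $\mathbf{+1}_B$ and $\mathbf{-1}_B$.

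That last point is the main obstacle. It is not true for arbitrary plateaus: a recurrent class could in principle consist of mixed configurations, with, for example, a single interior vertex oscillating while the rest of $B$ is frozen. To handle this I would first use the definition of $B$ as a maximal connected set of vertices that all flip infinitely often. From the current state of the class, I would build, by induction on the number of $-1$ spins, a sequence of zero-energy flips turning spins to $+1$. The inductive step uses attractivity: if $v\in B$ can flip from $-1$ to $+1$ with $\Delta\mathcal{H}_v=0$ in some state of the class, then adding more $+1$ spins among neighbours keeps that flip non-positive, and strict negativity is excluded because the class is on an energy plateau. Hence the maximal state of the class is $\mathbf{+1}_B$, and symmetrically its minimal state is $\mathbf{-1}_B$.

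I expect the delicate step to be justifying that the maximal state of the class is reached, that is, that each blinker can be pushed to $+1$ simultaneously rather than only individually. If this fails in general, I would fall back on restricting the statement to blinker sets $B$ defined as those components whose recurrent class contains both consensus states. Under the admissibility of Theorem~\ref{thm:decidability}, I would then argue that any non-consensus recurrent class would yield a strictly smaller fluctuating set, contradicting maximality.
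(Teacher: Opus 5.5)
Your reduction to a finite chain with frozen boundary, your observation that inside a closed recurrent class $R$ every transition has $\Delta\mathcal{H}=0$, and your plan of climbing to $\mathbf{+1}_B$ by at most $|B|$ upward flips all match the paper's proof. The step you flag as delicate is, however, a genuine gap, and the justification you sketch does not close it.

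Your monotonicity statement says: if $v$ can flip up in some state $\tau\in R$, then $v$ can flip up in every $\sigma\ge\tau$ with $\sigma(v)=-1$. To run the induction, you need a $-1$ site of the \emph{current} state $\sigma$ that can flip up at $\sigma$ itself. Knowing that each blinker flips up somewhere in $R$ only gives you states $\tau_v$, with no guarantee that $\tau_v\le\sigma$, so nothing transfers to $\sigma$. Your fallbacks do not repair this. Redefining blinker sets as those whose class contains both consensus states changes the statement. The ``strictly smaller fluctuating set'' argument is not substantiated.

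The missing idea is the trapping observation the paper invokes: ``if no further $-1\to+1$ transitions were available, the remaining $-1$ spins would be permanently trapped.'' Concretely, let $A=\{v\in B:\sigma(v)=-1\}$ and suppose every $v\in A$ has $\Delta\mathcal{H}_v(\sigma)>0$. Follow any path from $\sigma$ up to the first upward flip of a site of $A$. Along it the configuration on $B$ (boundary fixed) stays $\le\sigma$, since $\sigma=+1$ off $A$, and agrees with $\sigma$ on $A$. By attractivity each $v\in A$ then has cost at least $\Delta\mathcal{H}_v(\sigma)>0$ and rate $0$, so that first flip never occurs. But $R$ is closed and communicating, and each site of $A$, being a blinker, takes the value $+1$ somewhere in $R$, so this is impossible.

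In your own terms, the fix is a choice of comparison state. Take $\tau$ to be the state just before the first upward flip of a site of $A$ along a path inside $R$ from $\sigma$. Then automatically $\tau\le\sigma$, and your monotonicity statement transfers that flip to $\sigma$. With this, each inductive step yields a zero-cost upward flip that stays in $R$. Hence $\mathbf{+1}_B\in R$, symmetrically $\mathbf{-1}_B\in R$, and recurrence of the finite class gives infinitely many visits to both.

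A minor point: $T_0$ is not a stopping time. The Markov property after $T_0$ should therefore come from comparing with the fixed-boundary chain started at rational times; the paper is equally informal here.
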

\begin{proof}
Let $B$ be a finite connected cluster of fluctuating sites. By definition of Type $\mathcal{M}$ systems, $B$ is topologically isolated by frozen boundaries, meaning that any external site interacting with $B$ has a strictly fixed spin value. We assume the system has already escaped all transient configurations and has entered the recurrent zero-energy basin. 

Since $B$ consists of blinkers, the local configuration cannot permanently freeze, meaning that the current admissible state $\sigma_B$ is never absorbing. Therefore, at least one site in $B$ must be capable of undergoing a zero-energy transition. Without loss of generality, suppose a site flips from $-1$ to $+1$ at zero energy cost ($\Delta \mathcal{H} = 0$). By the attractiveness (spatial monotonicity) of the ferromagnetic potential, changing a spin from $-1$ to $+1$ increases or leaves unchanged the local magnetic fields for all remaining $-1$ sites in $B$. Since the configuration is admissible, no energy-lowering flips ($\Delta \mathcal{H} < 0$) can ever be unlocked; thus, the energy cost for any subsequent $-1 \to +1$ transition is forced to remain exactly zero. 

If, at any point in this sequence, no further $-1 \to +1$ transitions were available, the remaining $-1$ spins would be permanently trapped in that state, directly contradicting the hypothesis that all sites in $B$ are blinkers. Consequently, this zero-energy chain reaction must continue until all spins are cleared, driving the system to the uniform consensus configuration $\mathbf{+1}_B$.

Because $\mathbf{+1}_B$ is reached via a sequence of zero-energy transitions from an admissible state, it is itself an admissible configuration. Furthermore, since $B$ cannot freeze, $\mathbf{+1}_B$ cannot be absorbing, and there must exist at least one site capable of flipping from $+1$ to $-1$ at zero energy cost. By spin-flip symmetry and the exact same interplay between attractiveness and admissibility, this initial flip triggers a symmetric zero-energy chain reaction that  converts all remaining spins to $-1$, eventually reaching the opposite consensus configuration $\mathbf{-1}_B$. Since the state space of the basin is finite and contains no absorbing states, both trajectories are traversed infinitely often almost surely.
\end{proof}

\section{Examples of Spatial Extent and Classification}
\label{s:extended_blinkers}

To illustrate the tight connection between graph geometry, local field propagation, and the efficiency of the decidability criterion developed in Theorem \ref{thm:decidability}, we provide two constructive scenarios. First, we exhibit a Type $\mathcal{M}$ system supporting blinkers of any arbitrary spatial length, demonstrating that the search bound $N$ safely circumvents macroscopic cooperative fluctuations. Second, we formalize a generic Type $\mathcal{F}$ construction where spatial constraints enforce global freezing.

\subsection{Type $\mathcal{M}$: Field Cancellation and Arbitrarily Large Blinkers}
The graph $G$ is a decorated one-dimensional lattice where each unit cell $i \in \mathbb{Z}$ consists of a central backbone vertex $v_i$, an upper clique $U_i = \{u_{i,1}, u_{i,2}, u_{i,3}\}$, and a lower clique $D_i = \{d_{i,1}, d_{i,2}, d_{i,3}\}$. Each backbone vertex $v_i$ is connected to $v_{i \pm 1}$ and to exactly two vertices in both $U_i$ and $D_i$. The graph has spatial periodicity $L=7$ under the natural ordering.

By engineering the initial states of the cliques, we can perfectly cancel the local field on a segment of $N$ consecutive backbone vertices. As shown in Figure~\ref{fig:blinker}, we freeze the left wall ($U_m, D_m = -1$) and the right wall ($U_{m+N+1}, D_{m+N+1} = +1$), while setting the neutral interior cells to $U_i = +1$ and $D_i = -1$ for $i \in \{m+1, \dots, m+N\}$. This configuration leaves the interior vertices with a net zero field from their cliques, effectively behaving like a finite 1D Ising chain with fixed opposite boundaries where the interface performs a zero-energy random walk across the $N$ sites.

\begin{figure}[h]
    \centering
    \begin{tikzpicture}[
        scale=0.85, 
        minus/.style={circle, draw, fill=red!40, minimum size=0.6cm, inner sep=0pt, font=\small},
        plus/.style={circle, draw, fill=blue!40, minimum size=0.6cm, inner sep=0pt, font=\small},
        osc/.style={circle, draw, fill=white, minimum size=0.6cm, inner sep=0pt, font=\small},
        lbl/.style={rectangle, inner sep=1pt, font=\footnotesize, fill=white, opacity=0.8, text opacity=1},
        k3edge/.style={draw, thin},         
        latticeedge/.style={draw, thick},    
        conedge/.style={draw, thin}          
    ]
        
        \node[minus] (vm) at (-6.5,0) {$-$};
        \node[lbl] at (-6.0, 0.4) {$v_m$}; 
        
        \node[minus] (u1_m) at (-7.1, 1.3) {$-$}; \node[minus] (u2_m) at (-5.9, 1.3) {$-$}; \node[minus] (u3_m) at (-6.5, 2.3) {$-$};
        \draw[k3edge] (u1_m) -- (u2_m) -- (u3_m) -- (u1_m);
        \draw[conedge] (vm) -- (u1_m); \draw[conedge] (vm) -- (u2_m);
        
        \node[minus] (d1_m) at (-7.1, -1.3) {$-$}; \node[minus] (d2_m) at (-5.9, -1.3) {$-$}; \node[minus] (d3_m) at (-6.5, -2.3) {$-$};
        \draw[k3edge] (d1_m) -- (d2_m) -- (d3_m) -- (d1_m);
        \draw[conedge] (vm) -- (d1_m); \draw[conedge] (vm) -- (d2_m);

        \node[osc] (v1) at (-3.5,0) {};
        \node[lbl] at (-3.0, 0.4) {$v_{m+1}$};
        
        \node[plus] (u1_1) at (-4.1, 1.3) {$+$}; \node[plus] (u2_1) at (-2.9, 1.3) {$+$}; \node[plus] (u3_1) at (-3.5, 2.3) {$+$};
        \draw[k3edge] (u1_1) -- (u2_1) -- (u3_1) -- (u1_1);
        \draw[conedge] (v1) -- (u1_1); \draw[conedge] (v1) -- (u2_1);
        
        \node[minus] (d1_1) at (-4.1, -1.3) {$-$}; \node[minus] (d2_1) at (-2.9, -1.3) {$-$}; \node[minus] (d3_1) at (-3.5, -2.3) {$-$};
        \draw[k3edge] (d1_1) -- (d2_1) -- (d3_1) -- (d1_1);
        \draw[conedge] (v1) -- (d1_1); \draw[conedge] (v1) -- (d2_1);

        \node (dots) at (0,0) {\Large $\dots$};

        \node[osc] (vN) at (3.5,0) {};
        \node[lbl] at (4.0, 0.4) {$v_{m+N}$};
        
        \node[plus] (u1_N) at (2.9, 1.3) {$+$}; \node[plus] (u2_N) at (4.1, 1.3) {$+$}; \node[plus] (u3_N) at (3.5, 2.3) {$+$};
        \draw[k3edge] (u1_N) -- (u2_N) -- (u3_N) -- (u1_N);
        \draw[conedge] (vN) -- (u1_N); \draw[conedge] (vN) -- (u2_N);
        
        \node[minus] (d1_N) at (2.9, -1.3) {$-$}; \node[minus] (d2_N) at (4.1, -1.3) {$-$}; \node[minus] (d3_N) at (3.5, -2.3) {$-$};
        \draw[k3edge] (d1_N) -- (d2_N) -- (d3_N) -- (d1_N);
        \draw[conedge] (vN) -- (d1_N); \draw[conedge] (vN) -- (d2_N);

        \node[plus] (vN1) at (6.5,0) {$+$};
        \node[lbl] at (7.0, 0.4) {$v_{m+N+1}$};
        
        \node[plus] (u1_N1) at (5.9, 1.3) {$+$}; \node[plus] (u2_N1) at (7.1, 1.3) {$+$}; \node[plus] (u3_N1) at (6.5, 2.3) {$+$};
        \draw[k3edge] (u1_N1) -- (u2_N1) -- (u3_N1) -- (u1_N1);
        \draw[conedge] (vN1) -- (u1_N1); \draw[conedge] (vN1) -- (u2_N1);
        
        \node[plus] (d1_N1) at (5.9, -1.3) {$+$}; \node[plus] (d2_N1) at (7.1, -1.3) {$+$}; \node[plus] (d3_N1) at (6.5, -2.3) {$+$};
        \draw[k3edge] (d1_N1) -- (d2_N1) -- (d3_N1) -- (d1_N1);
        \draw[conedge] (vN1) -- (d1_N1); \draw[conedge] (vN1) -- (d2_N1);

        \draw[latticeedge, dashed] (-8.0,0) -- (vm);
        \draw[latticeedge, dashed] (vN1) -- (8.0,0);
        \draw[latticeedge] (vm) -- (v1);
        \draw[latticeedge] (v1) -- (-2.0,0); 
        \draw[latticeedge] (2.0,0) -- (vN);
        \draw[latticeedge] (vN) -- (vN1);

        \draw [thick, decoration={brace, mirror, raise=0.2cm}, decorate] (-4.5,-2.8) -- (4.5,-2.8) 
             node [pos=0.5, anchor=north, yshift=-0.4cm] {\small \textbf{$N$ interior cells (Blinker $B$)}};

    \end{tikzpicture}
    \caption{Spatially extended blinker configuration on the decorated graph.}
    \label{fig:blinker}
\end{figure}

Crucially, although macroscopic configurations can embed zero-energy fluctuations of arbitrary size $N$, the enumerative algorithm does not need to construct or search through these extended systems. Because the graph structure permits local field cancellation on any single isolated cell, the finite set of confined configurations $\mathcal{A}_N$ is guaranteed to contain an elementary blinker of length $1$. Under the product measure, these minimal fluctuations appear almost surely at short spatial scales, triggering a short-circuit termination of the algorithm within the threshold $|\Lambda| \le N$. The system is thus efficiently classified as Type $\mathcal{M}$.

\subsection{Type $\mathcal{F}$: Rigid Tilings and Global Fixation}
Conversely, we consider a quasi-transitive graph architecture where the local connectivity prevents any local field cancellation. A classic archetype is a heavily coordinated multi-layered ladder graph or a periodic graph with fully asymmetric degree distributions, where every vertex $v$ has an odd number of neighbors and is part of tightly interlocking odd cliques.

In such a topological regime, any attempt to isolate a fluctuating region by setting fixed boundaries fails to produce zero net fields. The strong local majority rules propagate deterministically. Whenever a configuration is confined between two stable walls $W_L$ and $W_R$, the interior spins are forced into a single, rigid satisfying assignment (a rigid tiling). No single site can flip without strictly increasing the energy ($\Delta \mathcal{H} > 0$). Consequently, when checking the finite set $\mathcal{A}_N$, the algorithm detects zero admissible configurations hosting a zero-energy transition. Since the spatial bound $N$ is exhaustive, this lack of local fluctuations proves that macroscopic interfaces cannot move, successfully classifying the network as Type $\mathcal{F}$.

\section{Conclusions and Future Perspectives}
\label{s:conclusions}

In this paper, we have established a comprehensive algorithmic framework to classify the zero-temperature Glauber dynamics of ferromagnetic Ising models on one-dimensional quasi-transitive graphs. As a foundational step, we fully characterized the \emph{shrink property}, proving it to be a necessary and sufficient geometric condition for the system to exhibit Type $\mathcal{I}$ behavior. Crucially, we demonstrated that this property can be verified through a highly efficient algorithm that checks for the existence of a finite sequence of valid transitions capable of systematically translating the interface between the opposite spin phases.

In the absence of the shrink property, the global asymptotic phase---distinguishing between complete fixation (Type $\mathcal{F}$) and the indefinite survival of perpetual local fluctuations (Type $\mathcal{M}$)---is entirely decidable via a finite enumeration analysis. Furthermore, we have shown that this structural classification exhibits robust invariance under physical fast quenching regimes.

We conclude by highlighting a fundamental open challenge and conjecture that naturally emerges from this framework.

\subsection*{The Asymmetric Phase Initial Density ($p \neq \frac{1}{2}$)}
Our characterization of Type $\mathcal{I}$ systems (Theorem \ref{t:main result d1}) strongly relies on a symmetric Bernoulli initial distribution ($p=\frac{1}{2}$), which guarantees exact global spin-flip symmetry. In the standard lattice $\mathbb{Z}$ with nearest-neighbor interactions, the fact that Type $\mathcal{I}$ behavior holds universally for any initial density $p \in (0,1)$ was explicitly established by Nanda, Newman, and Stein \cite{NNS2000}, building upon the underlying duality with coalescing random walks introduced in the seminal work of Arratia \cite{A1983}.

Extending this type of algebraic duality---or constructing alternative monotonic interfaces---for general quasi-transitive graphs with an extended interaction range $K > 1$ remains a major open problem. Since the geometric mechanisms driving the shrink property are independent of the initial density, we strongly believe that the restriction to $p=1/2$ is purely technical. We therefore formalize this intuition into the following conjecture:

\begin{conjecture}
Let $G=(\mathbb{Z},E) \in \mathcal{G}$ be a graph satisfying the shrink property. Then, the corresponding $I(G,p)$ model is of Type $\mathcal{I}$ universally for any initial density $p \in (0,1)$.
\end{conjecture}

The bridge built in this work between graph-combinatorics, symbolic dynamics, and statistical mechanics suggests that finite-space decidability criteria may serve as a powerful paradigm to analyze more complex disordered systems and cellular automata on quasi-transitive structures.

\section*{Acknowledgments}
The authors would like to thank Gemini (Google) for the insightful discussions and valuable support in refining the exposition, improving the mathematical notation, and streamlining the algorithmic presentation of this paper.

\bibliographystyle{abbrv}
\bibliography{EL}
\end{document}